\newtheorem{theorem}{Theorem}[section]
\newtheorem{corollary}[theorem]{Corollary}
\newtheorem{lemma}[theorem]{Lemma}
\newtheorem{proposition}[theorem]{Proposition}
\newtheorem{example}[theorem]{Example}
\theoremstyle{definition}
\newtheorem{definition}[theorem]{Definition}
\newtheorem{remark}[theorem]{Remark}
\numberwithin{equation}{section}
\newcommand{\be}{\begin{equation}}
\newcommand{\supp}{\mathbf{\mathrm{supp}}}
\newcommand{\ee}{\end{equation}}
\def\Z{{\mathbb Z}}
\def\R{\mathbb{R}}
\def\C{\mathbb{C}}
\let\cal\mathcal
\def\lin{{\rm lin\,}}
\def\cc{}
\let\e\epsilon
\def\cdotG{\rlap{\raisebox{-5pt}{ $\!\scriptstyle G$}}\;{\cdot}\;}
\begin{document}

\title[Quantum semigroups generated by locally compact semigroups] {Quantum semigroups generated by locally compact semigroups}

\author{M.A.\,Aukhadiev, Yu.N.\,Kuznetsova}
\address[M.A.\,Aukhadiev]{Mathematisches Institut, Westf{\"a}lische Wilhelms-Universit{\"a}t M{\"u}nster,  Einsteinstra{\ss}e 62, 48149 M{\"u}nster, Germany; \newline
Kazan State Power Engineering University\\
        Kazan, Russia, 420066 }
        \address[Yu.N.\,Kuznetsova]{Laboratoire de Math\'ematiques\\
        Universit\'e Bourgogne Franche-Comt\'e, 16 route de Gray, 25030 Besan\c con, France}
\email[M.A.\,Aukhadiev]{m.aukhadiev@wwu.de}
\email[Yu.N.\,Kuznetsova]{yulia.kuznetsova@univ-fcomte.fr}

\thanks{The first author is supported in part by the Alexander von Humboldt Foundation, the Russian Foundation for Basic
Research (Grant 14-01-31358) and the Leibniz Fellowship of the Oberwolfach Research Institute for Mathematics (MFO)}

\date{August 25, 2017}

\subjclass{Primary 81R15; 20G42; 16T10; 22A20}
\begin{abstract}
Let $S$ be a subsemigroup of a second countable locally compact group $G$, such that $S^{-1}S=G$. We consider the $C^*$-algebra $C^*_\delta(S)$ generated by the operators of translation by all elements of $S$ in $L^2(S)$. We show that this algebra admits a comultiplication which turns it into a compact quantum semigroup. The same is proved for the von Neumann algebra $VN(S)$ generated by $C^*_\delta(S)$.
\end{abstract}
\maketitle

\section{Introduction}

The notion of a quantum semigroup, as a $C^*$- or von Neumann algebra with a comultiplication, appeared well before the term and before the notion of a locally compact quantum group. But it is especially these last years that substantial examples of quantum semigroups are considered; we would like to mention families of maps on finite quantum spaces \cite{soltan}, quantum semigroups of quantum partial permutations \cite{banica}, quantum weakly almost periodic functionals \cite{daws}, quantum Bohr compactifications \cite{salmi, soltan2}.

In this article we construct a rather ``classical'' family of compact quantum semigroups, which are associated to sub-semigroups of locally compact groups.
The interest of our objects is in fact that they provide natural examples of  $C^*$-bialgebras which are co-commutative and are not however duals of functions algebras. Recall that the classical examples of quantum groups belong to one of the two following types: they are either function algebras, such as the algebra $C_0(G)$ of continuous functions vanishing at infinity on a locally compact group $G$, or their duals, such as the reduced group $C^*$-algebras $C^*_r(G)$. In the semigroup situation one can go beyond this dichotomy.

If $S$ is a discrete semigroup, then the algebra $C^*_\delta(S)$ which we consider coincides with the reduced semigroup $C^*$-algebra $C^*_r(S)$ which has been known since long ago \cite{coburn1,coburn2,barnes, word, paterson}.
If $S=G$ is a locally compact group, then $C^*_\delta(S)=C^*_\delta(G)$ is the $C^*$-algebra generated by all left translation operators in $B(L^2(G))$ \cite{kodaira, bedos}. If $G$ is moreover abelian, then $C^*_\delta(G)$ equals to the algebra $C(\widehat G_d)$ of continuous functions on the dual of the discrete group $G_d$ \cite{kodaira}.

The new case considered in this paper concerns non-discrete nontrivial subsemigroups of locally compact groups, and our objective is to show that their algebras admit a natural coalgebra structure.
Let $G$ be a second countable locally compact group, and let $S$ be its sub-semigroup such that $S^{-1}S=G$. Set $H_S=\{f\in L^2(G): \supp f\subset S\}$; let $E_S$ be the orthogonal projection of $L^2(G)$ onto $H_S$ and let $J_S$ be the right inverse of $E_S$, so that $E_SJ_S = {\rm Id}_{H_S}$.
After the study of semigroup ideals in Section 2, in Section 3 we define $C^*_\delta(S)$ as the $C^*$-algebra generated in $B(H_S)$ by the operators $T_a = E_S L_a J_S$ over all $a\in S$, where $L_a$ is the operator of the left translation by $a$ on $L^2(G)$.

The strong closure of $C^*_\delta(S)$ in $B(H_S)$ is denoted $VN(S)$ and is said to be the semigroup von Neumann algebra. In the case $S=G$, this is the classical group von Neumann algebra, and in the case when the interiour of $S$ is dense in it, this equals to the von Neumann algebra generated by the reduced $C^*$-algebra $C^*_r(S)$ introduced by Muhly and Renault \cite{muhly-renault}, see \hbox{Section 4}.

By defining it first on $VN(S)$, in Section 6 we show that $C^*_\delta(S)$ admits a comultiplication $\Delta$ such that $\Delta(T_a)=T_a\otimes T_a$.
To obtain the main result, we are using techniques of inductive limits and crossed products analogous to the constructions carried out in \cite{Li} and \cite{CEL} for the discrete case. The proof is also based on the duality of semi-lattices in Section \ref{dual}. The discrete abelian case was studied in detail in \cite{AGL}.

The article concludes by a more explicit description of $C^*_\delta(S)$ in the abelian case.

\section{Semigroup ideals}\label{section-ideals}

Let $G$ be a second countable locally compact group, $S$ a closed subsemigroup of $G$ containing the identity $e$ of $G$ and such that $G=S^{-1}S$. Denote by $\mu$ the left Haar measure on $G$. We suppose that $\mu(S)>0$, otherwise our definition would produce a trivially zero algebra; this implies immediately that $S$ has a nonempty interior (apply \cite[20.17]{HR} with $A,B\subset S$ compact of positive measure). One can hold in mind a model example $S=[0,+\infty)$, $G=\R$.

For any subset $X\subset S$ and any $p\in S$, define the {\it translations in $S$}:
\begin{equation}\label{px}
p\cc X=\{pq \colon q\in X\},\ p^{-1}\cc X=\{q\in S\colon pq\in X\}.
\end{equation}
Obviously, $p\cc S$ is a right ideal in $S$, $e\cc S=e^{-1}\cc S=S$ and $p^{-1}\cc S=S$ for any $p\in S$.
For the usual translations in $G$, we use the notation $g\cdotG X = \{gh: h\in X\}$, so that $p^{-1}X =S\cap p^{-1}\cdotG X$.
It is also easy to see that $p\cc (q\cc X)=(pq)\cc X$ and $p^{-1}\cc (q^{-1}\cc X) = (qp)^{-1}\cc X$ for any $X\subset S$ and all $p,q\in S$. We will omit parentheses in the products of this type. Moreover, $p^{-1}p X=X$,
but in general, the products $pq^{-1}$ or $p^{-1}q$ should be viewed purely formally, and $pp^{-1}\cc X$ might differ from $X$ (see, for example, Lemma \ref{s2}).

More precisely,
denote by $\mathcal{F}=\mathcal{F}(S)$ the free monoid generated by $S$ and $S^{-1} \setminus S $. Any element in $\mathcal{F}$ can be canonically reduced, by replacing the expressions $st$, $s^{-1}t^{-1}$ with $s,t\in S$ by $st$ and $s^{-1}t^{-1}$ respectively, to a form of a finite word with alternating symbols in  $S$ and $S^{-1}$. The monoid operation on $\mathcal{F}$ is concatenation of words combined with multiplication in $S$ of neighbouring elements, e.g. $$ (p^{-1}qr^{-1})\cdot(a^{-1}bc^{-1}d)=p^{-1}q(ar)^{-1}bc^{-1}d.$$
The operation of taking inverse in $G$ induces the operation $w\mapsto w^{-1}$ on the monoid $\mathcal{F}$, by $\big(p_1^{\pm1}\cdots p_n^{\pm1}\big)^{-1} = p_n^{\mp1}\cdots p_1^{\mp1}$.

For every $w=p_1^{\pm1}\cdots p_n^{\pm1}\in\mathcal{F}$ and $X\subset S$, define by induction $wX = p_1^{\pm1}(\dots(p_n^{\pm1}X)\dots)$. If $X=S$, then $wS$ is a right ideal in $S$. Define the family of all \emph{constructible right ideals} in $S$ \cite{Li}:
$$\mathcal{J}=\{\bigcap\limits_{i=1}^{n} w_iS\colon w_i\in\mathcal{F} \}\cup\{\emptyset\}.$$

Suppose that $w\in \cal F$ has the form $w=p^{-1}_1q_1p^{-1}_2q_2\dots p^{-1}_nq_n$ with $p_j,q_j\in S$, maybe with $p_1=e$ or $q_n=e$. Then it follows from the definition that $ wS$ is the set of elements $x$ satisfying
\begin{equation}\label{xf} \begin{array}{c}
x=p^{-1}_1q_1\dots p^{-1}_nq_n r_{n+1},\\
 \mbox{ where } r_{n+1}\in S \mbox{ and } \\ r_k=p^{-1}_kq_k\dots p^{-1}_nq_n r_{n+1} \in S \mbox{ for all } k=1,\dots,n.
\end{array}\end{equation}

Define a homomorphism $\mathcal{F}\to G$: $w\mapsto (w)_G$, by $(p^{\pm1})_G=p^{\pm1}$ for $p\in S$.
We fix also an injection  $\imath: G\hookrightarrow \mathcal{F}$ which might not be a homomorphism: for any element $g\in G$ we fix one of its representations $g=p^{-1}q$ and set $\imath(g)=p^{-1}q\in \mathcal{F}$. The notation $gX$, where $g\in G$ and $X\subset S$, is understood in the sense $gX=\imath(g)X$.

\begin{lemma}\label{s1}

For any $w_1,w_2\in\mathcal{F}$, we have $w_1w_2S\subset w_1S$.
\end{lemma}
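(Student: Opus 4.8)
The plan is to reduce the statement to the case where $w_2$ is a single generator, i.e. $w_2 = p$ or $w_2 = p^{-1}$ with $p \in S$, and then invoke induction on the word length of $w_2$. The base case (length zero, $w_2 = e$) is trivial since $w_1 e S = w_1 S$. For the inductive step, writing $w_2 = v\, g$ with $g \in S \cup (S^{-1}\setminus S)$ a generator and $v$ shorter, we have $w_1 w_2 S = (w_1 v)(g S)$, so it suffices to know that $g S \subset S$ for every generator $g$, because applying $\mathcal{F}$-letters to an inclusion preserves it. That last fact — monotonicity of the operation $X \mapsto p^{\pm 1} X$ — is itself immediate from the definitions in \eqref{px}: if $X \subset Y \subset S$ then $pX \subset pY$ and $p^{-1}X = \{q \in S : pq \in X\} \subset \{q \in S : pq \in Y\} = p^{-1}Y$; iterating over the letters of $w_1 v$ gives $(w_1 v)(gS) \subset (w_1 v) S$.

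So the real content is to check $gS \subset S$ for a single generator. If $g = p \in S$, then $pS$ is a right ideal of $S$, hence $pS \subset S$ — this was already observed in the paragraph following \eqref{px}. If $g = p^{-1}$ with $p \in S$, then by definition $p^{-1}S = \{q \in S : pq \in S\}$, which is a subset of $S$ by construction; in fact the text already notes $p^{-1}S = S$, but we only need the inclusion. Combining: $gS \subset S$ for every generator, and then the induction described above closes the argument, giving $w_1 w_2 S \subset w_1 S$.

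I do not anticipate a genuine obstacle here; the only thing to be careful about is the bookkeeping for the induction, namely that one peels letters off the \emph{right} end of $w_2$ (so that the innermost operation is the one applied first), and that the monotonicity lemma is applied letter-by-letter to the word $w_1 v$. One could also phrase the whole thing without explicit induction by simply noting that $w_2 S$ is a right ideal of $S$ contained in $S$ (for any $w_2$ ending appropriately), and that $w_1$ acting on the inclusion $w_2 S \subset S$ preserves it by monotonicity; this is essentially the same proof with the induction hidden inside the phrase ``$w_1$ acting on an inclusion.'' Either presentation is short, and the key step — the one worth stating explicitly — is the monotonicity of $X \mapsto w X$ in the argument $X$.
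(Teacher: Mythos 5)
Your proposal is correct and is essentially the paper's own argument spelled out in detail: the paper's one-line proof invokes exactly the facts $pS\subset S$ and $p^{-1}S=S$ and leaves the monotonicity of $X\mapsto wX$ and the induction on letters implicit. Nothing to change.
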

\begin{proof}
Follows immediately from the facts that $pS\subset S, p^{-1}S= S$ for any $p\in S$.
\end{proof}

\begin{lemma}\label{s2}

For any $w\in\mathcal{F}$, $wS=ww^{-1}S$.
\end{lemma}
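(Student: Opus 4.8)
The plan is to prove the two inclusions $wS \subset ww^{-1}S$ and $ww^{-1}S \subset wS$ separately, handling the easy one by a direct appeal to Lemma \ref{s1} and the substantive one by an explicit manipulation of the defining conditions \eqref{xf}.

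First, the inclusion $ww^{-1}S \subset wS$ is immediate from Lemma \ref{s1}: taking $w_1 = w$ and $w_2 = w^{-1}$ gives $w(w^{-1}S) \subset wS$. So the content of the lemma is the reverse inclusion $wS \subset ww^{-1}S$. I would first reduce to the case where $w$ begins and ends appropriately: write $w = p_1^{-1}q_1 \cdots p_n^{-1}q_n$ in the alternating form used for \eqref{xf} (allowing $p_1 = e$ or $q_n = e$ if $w$ begins with a letter from $S$ or ends with a letter from $S^{-1}$), so that $w^{-1} = q_n^{-1}p_n \cdots q_1^{-1}p_1$. Then $ww^{-1} = p_1^{-1}q_1\cdots p_n^{-1}q_n q_n^{-1}p_n \cdots q_1^{-1}p_1$, and I want to show every $x \in wS$ also lies in $ww^{-1}S$.

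The key step is the following: take $x \in wS$, so by \eqref{xf} there exist $r_{n+1} \in S$ and partial products $r_k = p_k^{-1}q_k \cdots p_n^{-1}q_n r_{n+1} \in S$ for $k = 1,\dots,n$, with $x = r_1$. I claim one can feed $x$ itself back through $w^{-1}$ and recover $x$. Concretely, one checks that $w^{-1}x = w^{-1}r_1$ makes sense in $S$ step by step: $p_1 r_1 = q_1 r_2 \in S$ (since $r_2 \in S$), then $q_1^{-1}(p_1 r_1) = r_2 \in S$, then $p_2 r_2 = q_2 r_3 \in S$, and so on, so that $w^{-1}x = r_{n+1}\in S$; and then applying $w$ to $r_{n+1}$ recovers exactly the chain $r_n, r_{n-1}, \dots, r_1 = x$, witnessing $x \in ww^{-1}S$ via \eqref{xf}. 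The point is that the formal product $q_j^{-1}q_j$ and $p_j p_j^{-1}$ cancellations that are illegitimate in general become legitimate here because the intermediate elements $r_k$ were already certified to lie in $S$ by the hypothesis $x \in wS$.

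The main obstacle I anticipate is purely bookkeeping: matching up the induction indices in \eqref{xf} for the long word $ww^{-1}$ (which has roughly $2n$ alternations, with a repeated letter $q_n q_n^{-1}$ in the middle) against the chain $(r_k)$ coming from $w$, and being careful about the degenerate cases $p_1 = e$ and $q_n = e$ so that the alternating-form hypothesis of \eqref{xf} is genuinely satisfied at each stage. No measure theory or topology enters; $S$ being a closed subsemigroup is not used here, only the semigroup axioms and the formal calculus of translations set up before the lemma. An alternative, perhaps cleaner, route is to induct on the length $n$ of $w$: writing $w = p^{-1}q w'$ with $w'$ shorter, use $w'S = w'(w')^{-1}S$ by induction together with the identities $q^{-1}q Y = Y$ for $Y \subset S$ and Lemma \ref{s1}, but the explicit chain argument above seems the most transparent.
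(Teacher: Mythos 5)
Your proposal is correct and follows essentially the same route as the paper's proof: the inclusion $ww^{-1}S\subset wS$ via Lemma \ref{s1}, and the reverse inclusion by writing $x=p_1^{-1}q_1\cdots p_n^{-1}q_nq_n^{-1}p_n\cdots q_1^{-1}p_1x$ and checking that the intermediate elements (your chain $r_2,\dots,r_{n+1}$ and back) all lie in $S$, so that \eqref{xf} is satisfied for the word $ww^{-1}$. Nothing essential is missing.
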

\begin{proof}
We can assume that $w$ has the form $w=p^{-1}_1q_1p^{-1}_2q_2\dots p^{-1}_nq_n$ with $p_j,q_j\in S$, maybe with $p_1=e$ or $q_n=e$. Then every $x\in wS$ has the form (\ref{xf}) with $r_{n+1}\in S$ and $r_k=p^{-1}_kq_k\dots p^{-1}_nq_n r_{n+1} \in S$ for all $k=1,\dots,n$.

Now write $x=p^{-1}_1q_1\dots p^{-1}_nq_nq_n^{-1}p_n\dots q_1^{-1}p_1x$. In this product, $x\in S$ and $r_{k+1}=q_k^{-1}p_k\dots q_1^{-1}p_1x\in S$ for $k=1,\dots,n$, as well as $r_k=p^{-1}_kq_k\dots p^{-1}_nq_n q_n^{-1}p_n\dots q_1^{-1}p_1x\in S$ for $k=1,\dots, n$. It follows that $x\in ww^{-1}S$, so $wS\subset ww^{-1}S$. The inverse inclusion follows from Lemma \ref{s1}.
\end{proof}

\begin{lemma}\label{s3}
Let a word $w\in\mathcal{F}$ have the form $w=w_1w_2$, where $w_1,w_2\in\mathcal{F}$. Then $wS=w_1S\cap (w_1)_G\,w_2S$.
\end{lemma}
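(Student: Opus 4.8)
The plan is to reduce the lemma to a single auxiliary identity describing the action of an arbitrary word on an arbitrary $Y\subseteq S$, namely
\[
wY=\{x\in wS:\ (w)_G^{-1}x\in Y\}\qquad(w\in\mathcal F,\ Y\subseteq S),
\]
the product $(w)_G^{-1}x$ being taken in $G$. The clause $x\in wS$ is essential: as the text already notes, $pp^{-1}X$ need not equal $X$, so $wY$ is genuinely not a function of $(w)_G$ alone. I would establish this identity by induction on the number of letters of $w$, removing the leftmost letter at each step. For $w=e$ it holds because $Y\subseteq S$. If $w=p\,w'$ with $p\in S$, then $wY=p(w'Y)$ and $wS=p(w'S)$; substituting $x=px'$, inserting the inductive description of $w'Y$, and using $(w)_G=p\,(w')_G$ together with the evident equivalence $x\in p(w'S)\Leftrightarrow p^{-1}x\in w'S$, one reads off the formula for $wY$. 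If $w=p^{-1}w'$ with $p\in S$, $p^{-1}\notin S$, then by definition $wY=\{x\in S:px\in w'Y\}$ and $wS=\{x\in S:px\in w'S\}$; inserting the inductive description of $w'Y$ and using $(w)_G=p^{-1}(w')_G$ gives the claim again. The only care needed is to keep track of which membership conditions are automatic owing to the inclusions into $S$.

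Granting this, fix $w_1,w_2$ and set $g=(w_1)_G$. Writing the chosen representative $\imath(g)=p^{-1}q$ with $p,q\in S$ and $p^{-1}q=g$ in $G$, the convention $gX=\imath(g)X$ yields
\[
(w_1)_G\,w_2S=p^{-1}\bigl(q(w_2S)\bigr)=\{x\in S:\ q^{-1}px\in w_2S\}=\{x\in S:\ g^{-1}x\in w_2S\},
\]
since $q^{-1}p=(p^{-1}q)^{-1}=g^{-1}$. On the other hand, the auxiliary identity applied to $w=w_1$ and $Y=w_2S$ (which is a subset of $S$) reads
\[
w_1w_2S=w_1(w_2S)=\{x\in w_1S:\ g^{-1}x\in w_2S\}.
\]
As $w_1S\subseteq S$, the requirement $x\in S$ is automatic for $x\in w_1S$, so this set equals $w_1S\cap\{x\in S:\ g^{-1}x\in w_2S\}=w_1S\cap(w_1)_G\,w_2S$, which is the assertion of the lemma. (The inclusion $w_1w_2S\subseteq w_1S$ is in any case already contained in Lemma~\ref{s1}.)

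I expect the only real obstacle to be a careful proof of the auxiliary identity: everything there amounts to juggling inverses computed in $G$ against the two partial operations $pX=\{pq:q\in X\}$ and $p^{-1}X=\{q\in S:pq\in X\}$ on subsets of $S$, and the one pitfall to avoid is the plausible but false belief that $wY$ depends only on $(w)_G$.
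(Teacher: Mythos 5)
Your proof is correct, and it takes a genuinely different route from the paper's. The paper works directly with the explicit parametrization \eqref{xf} of $wS$: splitting $w=w_1w_2$, it reads off $x\in w_1S$ and $x\in (w_1)_Gw_2S$ from the chain of conditions $r_k\in S$, and for the converse inclusion it uses cancellation in $G$ to identify the remainder $r_{i+1}'$ with $p_{i+1}^{-1}q_{i+1}\cdots p_n^{-1}q_nr_{n+1}$ and reassemble \eqref{xf}. You instead isolate the auxiliary identity $wY=\{x\in wS:\ (w)_G^{-1}x\in Y\}$ for arbitrary $Y\subseteq S$, proved by induction on word length, and then specialize to $Y=w_2S$; both inductive cases check out (the step $x\in p(w'S)\Leftrightarrow p^{-1}x\in w'S$ needs only injectivity of left translation in $G$, and the $p^{-1}$ case is a direct unwinding of the definition \eqref{px}). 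Your approach is more modular and buys something the paper leaves implicit: the computation $p^{-1}(qY)=\{x\in S:\ q^{-1}px\in Y\}=\{x\in S:\ g^{-1}x\in Y\}$ shows that $\imath(g)Y$ depends only on $g$ and not on the chosen factorization $g=p^{-1}q$, so the expression $(w_1)_Gw_2S$ in the statement is unambiguous. The paper's argument is shorter but asks the reader to track the conditions in \eqref{xf} by hand; yours front-loads the bookkeeping into one reusable identity, at the cost of an extra induction.
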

\begin{proof}
Suppose $w_1=p^{-1}_1q_1p^{-1}_2q_2\dots p^{-1}_iq_i$, $w_2=p^{-1}_{i+1}q_{i+1}p^{-1}_{i+2}q_{i+2}\dots p^{-1}_nq_n$ with $p_j,q_j\in S$. Then every $x\in wS$ satisfies \eqref{xf}. This implies directly that $x\in w_1S$. If we denote $(w_1)_G=p^{-1}q$, $p,q\in S$, then in the notations \eqref{xf} we have also $x=p^{-1}qr_{i+1}$ what implies that $x\in p^{-1}qw_2S = (w_1)_Gw_2S$.

Conversely, if $x\in w_1S\cap (w_1)_Gw_2S$, then

$$x=p^{-1}_1q_1p^{-1}_2q_2\dots p^{-1}_iq_ir_{i+1}',$$
$$r_{i+1}'\in S,\ r_k'= p^{-1}_kq_k\dots p^{-1}_iq_ir_i'\in S\ \mbox{ for }k=1,\dots,i,$$
and $$ x=p^{-1}qp^{-1}_{i+1}q_{i+1}p^{-1}_{i+2}q_{i+2}\dots p^{-1}_nq_n r_{n+1},$$ $$x\in S,\  r_{n+1}\in S,\ r_k= p^{-1}_kq_k\dots p^{-1}_nq_nr_{n+1}\in S\ \mbox{ for }k=i+1,\dots,n.$$
By cancellation, it follows that $r_{i+1}' = p^{-1}_{i+1}q_{i+1}p^{-1}_{i+2}q_{i+2}\dots p^{-1}_nq_n r_{n+1}$, thus in fact the condition \eqref{xf} holds for $x$ and $x\in wS$.
\end{proof}

\begin{corollary}\label{s4}
For any $v,w\in\mathcal{F}$, we have $vS\cap wS=ww^{-1}vS$.
\end{corollary}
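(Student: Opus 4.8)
The plan is to read the identity off directly from Lemma \ref{s3} and Lemma \ref{s2}, once the word $ww^{-1}v$ is factorized correctly. Since concatenation in the free monoid $\mathcal{F}$ is associative, I may regard $ww^{-1}v$ as $w_1w_2$ with $w_1=ww^{-1}\in\mathcal{F}$ and $w_2=v\in\mathcal{F}$. Applying Lemma \ref{s3} to this factorization gives
$$ww^{-1}v\,S = (ww^{-1})\,S\ \cap\ (ww^{-1})_G\,v\,S .$$

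Next I would simplify each of the two sets on the right. For the first one, Lemma \ref{s2} applied to the word $w$ gives $(ww^{-1})\,S = wS$. For the second one, I use that $w\mapsto(w)_G$ is a homomorphism, so $(ww^{-1})_G=(w)_G(w)_G^{-1}=e$; hence, by the convention that $(w_1)_Gw_2S$ means $\imath((w_1)_G)w_2S$, the second set equals $\imath(e)\,v\,S$. Since $\imath(e)$ is a representation $p^{-1}p$ of $e$ with $p\in S$ (or simply the identity of $\mathcal{F}$), the relation $p^{-1}pX=X$, valid for every $X\subset S$, shows $\imath(e)\,v\,S=vS$. Substituting both simplifications yields $ww^{-1}v\,S=wS\cap vS=vS\cap wS$, which is the assertion.

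The only step that deserves a moment's care is the treatment of the prefactor $(ww^{-1})_G$ produced by Lemma \ref{s3}: one has to notice that it maps to the identity of $G$ and then invoke $p^{-1}pX=X$ to conclude that it leaves $vS$ unchanged. Everything else is immediate; in particular no fresh manipulation of the defining conditions \eqref{xf} is needed, that work having already been absorbed into the proofs of Lemmas \ref{s2} and \ref{s3}. (As a consistency check, applying the same argument with the roles of $v$ and $w$ exchanged recovers $vS\cap wS=vv^{-1}wS$, matching the symmetry of the left-hand side.)
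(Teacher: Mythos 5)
Your proof is correct and follows essentially the same route as the paper's: both apply Lemma \ref{s3} to the factorization $ww^{-1}\cdot v$ together with Lemma \ref{s2} and the observation that $(ww^{-1})_G=e$. Your extra care about $\imath(e)vS=vS$ via $p^{-1}pX=X$ is a legitimate point the paper leaves implicit.
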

\begin{proof}
Since $(ww^{-1})_G=e$, by Lemmas \ref{s2} and \ref{s3} we have
$$wS\cap vS=ww^{-1}S\cap vS=ww^{-1}S\cap (ww^{-1})_GvS=ww^{-1}vS.$$
\end{proof}

It follows directly that $$\mathcal{J}=\{wS|\ w\in F\}\cup \{\emptyset\} .$$

\begin{definition} We will say that measurable subsets $X,Y$ of $G$ are equivalent and write $X\sim Y$ if $\mu(X\Delta Y)=0$, where $\Delta$ denotes the symmetric difference. The equivalence class of $X$ is denoted by $[X]$. \end{definition}

For any $X,X',Y,Y'\in\mathcal{J}$ and $p\in S$ the following holds.

\begin{enumerate}
	\item If $X\sim X'$ and $Y\sim Y'$, then $X\cap Y\sim X'\cap Y'$ and $X\cup Y\sim X'\cup Y'$.
	\item If $X\sim X'$, then $p\cc X\sim p\cc X'$ and $p^{-1}\cc X\sim p^{-1}\cc X'$.
\end{enumerate}
We define, as usual, $[X]\cap[Y]=[X\cap Y]$, $[X]\cup[Y]=[X\cup Y]$, $p\cc[X]=[p\cc X]$, $p^{-1}\cc[X]=[p^{-1}\cc X]$, for any $X,Y\in\mathcal{J}$ and $p\in S$. We will work further with the set  $\mathcal{J}'=\{[X]\colon X\in\mathcal{J}\}$.

The following notion was defined by X.~Li in \cite{Li}. The constructible right ideals of the semigroup $S$ are \emph{independent}, if $X=\cup^n_{j=1}X_j$  for $X,X_1,\dots ,X_n\in\mathcal{J}$, $n\in\mathbb{N}$ implies $X=X_j$ for some $1\leq j\leq n$. This notion is appropriate for a discrete semigroup $S$. In our case this definition should be adjusted.

We say that the constructible right ideals of $S$ are \emph{topologically independent}, if
$$X\sim \cup^n_{j=1}X_j \mbox{ for } X,X_1,\dots ,X_n\in\mathcal{J} \mbox{ implies } $$ $$ X\sim X_j \mbox{ for some } 1\leq j\leq n. $$

Passing to $\mathcal{J}'$, we get the following reformulation of topological independence.
$$[X]= \cup^n_{j=1}[X_j] \mbox{ for } X,X_1,\dots ,X_n\in\mathcal{J} \mbox{ implies } $$ $$[X]=[X_j] \mbox{ for some } 1\leq j\leq n. $$

The two notions in fact do not coincide. We present a simple example of a semigroup, which has not independent constructible right ideals, and at the same time the ideals are topologically independent. 

\begin{example}
	Let $G$ be the group $\R_+\setminus\{0\}$ with respect to the usual multiplication, and consider the subsemigroup $S=\{1 \}\cup\{2\}\cup [3;+\infty)$ in $G$. Computing the constructible right ideals $2S,\ 4^{-1}3S,\ 3^{-1}2S$ we get
	$$2S=\{2\}\cup\{4\}\cup[6;+\infty),$$
	$$4^{-1}3S=[3;+\infty),$$
	$$3^{-1}2S=\{2\}\cup [3;+\infty).$$
	Hence, we have non-independent ideals: $2S\cup 4^{-1}3S= 3^{-1}2S$. At the same time, all the ideals are equivalent to the ideals of the type $[a;+\infty), a\geq 3$, and therefore are topologically independent.
\end{example}

The following is an example of a semigroup whose ideals are not topologically independent.

\begin{example}
	Consider $S=\{0\}\cup[1;1.5]\cup[2;\infty)$ as a subsemigroup of the group $\R$ with respect to usual addition and the usual topology. Further compute the following ideals:
	$$1+S=\{1\}\cup[2;2.5]\cup [3;\infty ),$$
	$$1.5+S=\{1.5\}\cup[2.5;3]\cup [3.5;\infty ),$$
	$$-1.5+(1+S)=\{1\}\cup\{1.5\}\cup [2;\infty ).$$
	We easily see that $-1.5+(1+S)=(1+S)\cup (1.5+S)$, and the same is true for the equivalence classes of these ideals. Hence, the ideals of $S$ are not independent and not topologically independent.  
\end{example}

Both examples above are called \emph{perforated semigroups}, since they are obtained from $\R_+$ by deleting some intervals.

\section{The semigroup C*-algebras}

Further on we will assume that the constructible right ideals of $S$ are topologically independent. It is exactly this property which will guarantee that our comultiplication is well defined.

Consider the Hilbert space $L^2(G)$ with respect to $\mu$. For any measurable subset $X\subset G$ set $H_X=\{f\in L^2(G): {\rm ess}\supp f\subset X\}$; this subspace is isomorphic to $L^2(X,\mu)$. Let $I_X\in L^2(G)$ be the characteristic function of $X$, and let $E_X$ be the orthogonal projection of $L^2(G)$ onto $H_X$, which is just the multiplication by $I_X$. Let $L\colon G\to B(L^2(G)) $ be the left regular representation of $G$, i.e. for any $a,b\in G,\ f\in L^2(G)$
\begin{equation}\label{la} (L_af)(b)=f(a^{-1}b).
\end{equation}
We define the left regular representation $T\colon S\to B(H_S)$ of the semigroup $S$ analogously to $L$. For any $a,b\in S,\ f\in H_S$ we set
\begin{equation}\label{e5}
(T_af)(b)=f(a^{-1}b),
\end{equation}
so that $T_a=E_SL_aE_S$; then
\begin{equation}\label{e6}
(T_a^*f)(b)=I_S(b)f(ab).
\end{equation}

One can verify that $T_a$ is an isometry, $T_a^*T_a=I$, and that for any $f\in H_S$ and $a,b\in S$ we have
$$ (T_aT_a^*f)(b)=I_S(a^{-1}b)f(b).$$
Clearly, $a^{-1}b\in S$ if and only if $b\in aS$, where $aS$ is a constructible right ideal defined in the previous section. Hence the projection   $T_aT_a^*$ is an operator of multiplication by $I_{aS}$. The map $T\colon S\to B(H_S)$ is obviously a representation of $S$.

\begin{definition}
Let $C^*_\delta(S)$ be the $C^*$-subalgebra in $B(H_S)$ generated by the operators $\{T_a: a\in S\}$.
Denote by $VN(S)$ the strong operator closure of $C^*_\delta(S)$ in $B(H_S)$ and call it the \emph{semigroup von Neumann algebra} of $S$.
\end{definition}

If $S=G$, then $C^*_\delta(S)=C^*_\delta(G)$ is the $C^*$-algebra generated by all left translation operators in $B(L^2(G))$ \cite{kodaira, bedos}. If $S$ is discrete, then $C^*_\delta(S)=C^*_r(S)$ is the reduced semigroup $C^*$-algebra \cite{paterson}.

A finite product of the generators $T_a, T_b^*$ for any $a,b\in S$ is called \emph{a monomial}. We will denote also $T_a^*$ by $T_{a^{-1}}$, what does not create confusion in the case $a^{-1}\in S$. Generally, for every $w=p_1^{\pm1}\dots p_n^{\pm1}\in\mathcal F$ we denote $T_w = T_{p_1^{\pm1}}\dots T_{p_n^{\pm1}}$, and clearly every monomial has this form.

\begin{lemma}\label{l5}
For any monomial $T_w$, function $f\in H_S$ and $x\in G$ we have
\begin{equation}\label{e7}
(T_wf)(x)=I_{wS}(x)\cdot f((w^{-1})_Gx)
\end{equation}
\end{lemma}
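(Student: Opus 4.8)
The plan is to prove the formula \eqref{e7} by induction on the length $n$ of the word $w=p_1^{\pm1}\cdots p_n^{\pm1}$. For $n=0$ we have $T_w=\mathrm{Id}_{H_S}$, $wS=S$, and $(w^{-1})_G=e$, so the formula reads $(f)(x)=I_S(x)f(x)$, which holds for $f\in H_S$. For the inductive step I would write $w=p_1^{\pm1}v$ with $v=p_2^{\pm1}\cdots p_n^{\pm1}$, so that $T_w=T_{p_1^{\pm1}}T_v$, and apply the inductive hypothesis to $g:=T_vf\in H_S$, which gives $g(x)=I_{vS}(x)f((v^{-1})_Gx)$ for all $x\in G$. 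It then remains to compute $(T_{p_1^{\pm1}}g)(x)$ in the two cases $p_1^{+1}=p$ and $p_1^{-1}=p^{-1}$ for $p\in S$, using \eqref{e5} and \eqref{e6}.

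In the case $w=pv$ with $p\in S$: by \eqref{e5}, $(T_pg)(x)=I_S(x)g(p^{-1}x)=I_S(x)I_{vS}(p^{-1}x)f((v^{-1})_Gp^{-1}x)$. Now $(w^{-1})_G=(v^{-1})_G p^{-1}$, so the function argument is correct; and $I_S(x)I_{vS}(p^{-1}x)$ is the indicator of $\{x\in S: p^{-1}x\in vS\}=S\cap p\cdot_G(vS)=p(vS)=pvS=wS$ — here one uses the definition \eqref{px} of the translation $p^{-1}\cc(\cdot)$ applied to $vS\subset S$, namely $p^{-1}\cc(vS)=\{x\in S: px\in vS\}$, together with the fact that $p(vS)$ is exactly the preimage description; more directly, $pvS$ is by definition $p\cc(vS)=\{px:x\in vS\}$, and $x\in pvS$ iff $x\in S$ and $p^{-1}x\in vS$, which is precisely what the product of indicators expresses. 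In the case $w=p^{-1}v$ with $p\in S$: by \eqref{e6}, $(T_p^*g)(x)=I_S(x)g(px)=I_S(x)I_{vS}(px)f((v^{-1})_G px)$, the argument being $(v^{-1})_G p=( (p^{-1}v)^{-1})_G x$ applied — wait, rather $(w^{-1})_G=(v^{-1})_G p$, matching; and $I_S(x)I_{vS}(px)$ is the indicator of $\{x\in S: px\in vS\}=p^{-1}\cc(vS)=p^{-1}vS=wS$ by \eqref{px}.

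Thus in both cases $(T_wf)(x)=I_{wS}(x)f((w^{-1})_Gx)$, completing the induction. I expect the only subtle point to be the bookkeeping that identifies the support set $wS$ correctly with the set carved out by the product of indicator functions; this is where the definitions \eqref{px} of $p\cc X$ and $p^{-1}\cc X$ are used, and where one must be careful that $wS$ is genuinely a subset of $S$ (automatic, since $wS$ is a right ideal in $S$ by the remarks preceding Lemma \ref{s1}), so that multiplying by $I_S$ at each stage is harmless. Everything else is a direct substitution into the explicit formulas \eqref{e5} and \eqref{e6} for $T_a$ and $T_a^*$.
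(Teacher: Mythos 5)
Your proof is correct and follows essentially the same route as the paper: induction on the length of $w$, peeling off the leading letter $p^{\pm1}$, applying the explicit formulas \eqref{e5}--\eqref{e6} to $g=T_vf\in H_S$, and identifying the resulting product of indicators with $I_{wS}$ via the definitions \eqref{px} of $p\cc X$ and $p^{-1}\cc X$. The only cosmetic differences are that you start the induction at the empty word rather than at length one, and you spell out the support bookkeeping slightly more explicitly; both are fine.
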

\begin{proof}
Let $k$ be the length of the word $w$. For $k=1$, either $w=a$ or $w=a^{-1}$ with some $a\in S$. If $w=a$ then for $f\in H_S$ we have
$f(a^{-1}x) = f(a^{-1}x)I_S(a^{-1}x) = I_{aS}(x)f(a^{-1}x)$, thus the expressions \eqref{e5} and \eqref{e7} are equal. If $w=a^{-1}$, then, due to the fact that $a^{-1}S=S$, the formula (\ref{e6}) implies (\ref{e7}).

 Suppose (\ref{e7}) is proved for $k\leq n$ and $w=vw'$ is a word in $\mathcal{F}$ with the length $k+1$, where the length of $v$ and $w'$ is 1 and $k$ respectively. First assume that $v=a\in S$ and denote $g=T_{w'}f$. Then for any $x\in G$ we have
\begin{align*}
(T_wf)(x)&=(T_aT_{w'}f)(x)=(T_ag)(x)
=g(a^{-1}x)
\\&=(T_{w'}f)(a^{-1}x)
=I_{w'S}(a^{-1}x)f((w'^{-1})_Ga^{-1}x)
\\&
=I_{aw'S}(x)f(((aw')^{-1})_Gx)
 = I_{wS}(x)\cdot f((w^{-1})_Gx).
\end{align*}
Now assume that $v=a^{-1}\in S^{-1}$. Then for any $x\in G$ we have

 $$(T_wf)(x)=(T_a^*T_{w'}f)(x)=(T_a^*g)(x)=$$
 $$=I_S(x)g(ax)=I_S(x)(T_{w'}f)(ax)=I_S(x)I_{w'S}(ax)f((w'^{-1})_Gax).$$
 Note that $x\in S$ and $ax\in w'S$ if and only if $x\in a^{-1}w'S$. Thus,
 $$(T_wf)(x)=I_{a^{-1}w'S}(x)f(((a^{-1}w')^{-1})_Gx)=I_{wS}(x)\cdot f((w^{-1})_Gx).$$
The formula (\ref{e7}) follows.
\end{proof}

\begin{lemma}\label{l2}
The $C^*$-algebra $C^*_\delta(S)$ is isomorphic to the $C^*$-subalgebra in $B(L^2(G))$ generated as a \textbf{linear space} by
\begin{equation}\label{EwSLwES}
E_{wS}L_{(w)_G}E_S, \quad w\in\mathcal{F},
\end{equation}
and equivalently by
\begin{equation}\label{EwSLw}
E_{wS}L_{(w)_G}, \quad w\in\mathcal{F}.
\end{equation}
\end{lemma}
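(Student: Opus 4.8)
The plan is to establish the isomorphism in two stages: first identify $C^*_\delta(S)$ (generated by the $T_a$, $a\in S$) with the closed linear span of the monomials $T_w$, $w\in\mathcal F$, and then transfer each monomial $T_w$ acting on $H_S$ to the operator $E_{wS}L_{(w)_G}E_S$ acting on $L^2(G)$, using the isometry $J_S\colon H_S\hookrightarrow L^2(G)$ and its left inverse $E_S$. The equivalence of \eqref{EwSLwES} and \eqref{EwSLw} is the easy endpoint: since the range of $L_{(w)_G}E_S$ already lies in $L_{(w)_G}H_S$, and $E_{wS}$ will turn out to act as $E_{wS}E_{(w)_GS}=E_{(w)_GS\cap wS}=E_{wS}$ on that range (because $wS\subset(w)_GS$ as $wS\subset S$ and left translation by $(w)_G$ sends it into $(w)_GS$), one checks $E_{wS}L_{(w)_G}E_S = E_{wS}L_{(w)_G}$ directly from Lemma~\ref{l5}, or rather from the explicit formula $(E_{wS}L_{(w)_G}E_Sf)(x)=I_{wS}(x)f((w^{-1})_Gx)$, which holds for all $f\in L^2(G)$ by the same computation as in the proof of Lemma~\ref{l5} (the computation there never used $f\in H_S$ in an essential way once the indicator $I_{wS}$ is in place).

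First I would record that the monomials span a dense $*$-subalgebra of $C^*_\delta(S)$: this is immediate since $C^*_\delta(S)$ is by definition generated by the $T_a$, products of $T_a$ and $T_b^*$ are exactly the monomials, and the linear span of monomials is a $*$-subalgebra (closed under adjoints since $T_w^* = T_{w^{-1}}$ up to reading the word backwards, and closed under products by concatenation). So $C^*_\delta(S) = \overline{\operatorname{lin}}\{T_w : w\in\mathcal F\}$. Next, define $\Phi\colon B(H_S)\to B(L^2(G))$ by $\Phi(A) = J_S A E_S$, equivalently $A\mapsto $ the operator that is $A$ on $H_S$ and $0$ on $H_S^\perp = H_{G\setminus S}$; this is a $*$-homomorphism from $B(H_S)$ onto the corner $E_S B(L^2(G)) E_S$, hence isometric on that corner and in particular injective on $C^*_\delta(S)$. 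Then Lemma~\ref{l5} says precisely that $\Phi(T_w)$ and $E_{wS}L_{(w)_G}E_S$ have the same action on $H_S$ and both vanish on $H_S^\perp$ (the latter because of the trailing $E_S$), so $\Phi(T_w) = E_{wS}L_{(w)_G}E_S$ for every $w$. Applying $\Phi$ to the dense span of monomials and taking closures gives that $\Phi$ restricts to a $*$-isomorphism of $C^*_\delta(S)$ onto $\overline{\operatorname{lin}}\{E_{wS}L_{(w)_G}E_S : w\in\mathcal F\}$, which is the first named algebra in the statement.

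The one genuine point requiring care — and the step I expect to be the main obstacle — is verifying that the operators $\{\Phi(T_w)\}=\{E_{wS}L_{(w)_G}E_S\}$ actually do span a $*$-subalgebra of $B(L^2(G))$, i.e.\ that this linear span is closed under multiplication and adjoints, so that its closure is a $C^*$-algebra and the isomorphism assertion makes sense. Adjoints are fine: $\big(E_{wS}L_{(w)_G}E_S\big)^* = E_S L_{(w^{-1})_G} E_{wS}$, and one must recognize this as (a linear combination of) operators of the listed form; using $E_{wS} = E_{wS}L_{(w)_G}E_S\cdot(\text{something})$ is circular, so instead I would argue via $\Phi$: since $\Phi$ is a $*$-homomorphism and $T_w^*$ is again a monomial, $\Phi(T_w)^* = \Phi(T_w^*)$ is on the list. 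Products are handled the same way: $\Phi(T_v)\Phi(T_w) = \Phi(T_vT_w) = \Phi(T_{vw})$, again on the list. Thus the structural closure properties are inherited from the monomial algebra on $H_S$ through $\Phi$ for free, and the only thing left is the bookkeeping identity $\Phi(T_w) = E_{wS}L_{(w)_G}E_S$, which is exactly Lemma~\ref{l5} reread as an operator identity on all of $L^2(G)$. Finally, the passage from \eqref{EwSLwES} to \eqref{EwSLw} is the short computation indicated in the first paragraph, noting $E_{wS}L_{(w)_G}(I - E_S) = 0$ because $L_{(w)_G}$ maps $H_{G\setminus S}$ into $H_{(w)_G\cdot_G(G\setminus S)}$, which is orthogonal to $H_{wS}$ since $wS\subset S$ forces $wS\cap (w)_G\cdot_G(G\setminus S) = (w)_G\cdot_G\big((w^{-1})_GwS\setminus S\big)$, and $(w^{-1})_GwS\subset S$ — or more cleanly, just compare the two explicit integral-kernel formulas and see they agree.
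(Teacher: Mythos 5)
Your proof is correct and follows essentially the same route as the paper: the corner embedding $T\mapsto E_STE_S$ (your $\Phi$), the identification $\Phi(T_w)=E_{wS}L_{(w)_G}E_S$ via Lemma~\ref{l5}, and the passage to \eqref{EwSLw} via the inclusion $wS\subset (w)_G\cdot_G S$. The paper phrases that last step as the commutation relation $L_gE_S=E_{g\cdot_G S}L_g$, but this is the same computation you carry out.
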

\begin{proof}
It follows directly from \eqref{e7} that $T_w E_S = E_{wS} L_{(w)_G} E_S$ for every $w\in\mathcal F$. At the same time, $E_STE_S=TE_S$ for every $T\in C^*_\delta(S)$. Thus, the mapping $T\mapsto TE_S=E_STE_S$ is a *-homomorphism from $C^*_\delta(S)$ to $B(L^2(G))$, and its image is generated exactly by the operators \eqref{EwSLw}. Moreover, this mapping is clearly isometric and thus it is an isomorphism.

To arrive at the second description, one calculates that $L_g E_S = E_{g\cdotG S} L_g$ for every $g\in G$. Thus,
$$
E_{wS}L_{(w)_G}E_S = E_{wS\cap ((w)_G\cdotG S)}L_{(w)_G}.
$$
By definition, $wS\subset (w)_G\cdotG S$, and \eqref{EwSLw} follows.
\end{proof}

The formula (\ref{e7}) shows that $T_w= 0$ if and only if $\mu(wS)=0$, i.e. $wS\sim \emptyset$. For a non-zero monomial $T_w$ define its \emph{index} by $(w)_G\in G$. We have $\mathrm{ind}T_w^*=(w)_G^{-1}$ and $\mathrm{ind}(T_vT_w)=(v)_G(w)_G$, if $T_vT_w\neq 0$. Recall that $E_X\in B(L^2(G))$ is the operator of multiplication by $I_X$.

\begin{corollary}\label{c2} A non-zero monomial $T_w$ in $C^*_\delta(S)$ is an orthogonal projection if and only if $\mathrm{ind}T_w=e$. In this case $T_w=E_{wS}$.

\end{corollary}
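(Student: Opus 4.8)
The plan is to exploit Lemma~\ref{l5}, which gives the explicit action $(T_wf)(x)=I_{wS}(x)\,f((w^{-1})_Gx)$, and to read off from it exactly when $T_w$ is a projection. First I would prove the easy direction: if $\mathrm{ind}\,T_w=(w)_G=e$, then $(w^{-1})_G=(w)_G^{-1}=e$, so formula \eqref{e7} collapses to $(T_wf)(x)=I_{wS}(x)f(x)$, which is precisely multiplication by $I_{wS}$, i.e.\ $T_w=E_{wS}$. Since $E_{wS}$ is an orthogonal projection, this settles the ``if'' part and simultaneously gives the last assertion of the corollary.

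For the converse, suppose $T_w$ is an orthogonal projection; I must show $(w)_G=e$. The first observation is that $T_w$ is an isometry times the relevant source projection in the sense that $T_w^*T_w$ is a projection onto some $H_X$ — more precisely, a monomial built from the $T_a$ and $T_a^*$ always satisfies $\|T_w\|\le 1$, and one checks from \eqref{e7} that $T_w$ maps $H_S$ into $H_{wS}$. If $T_w$ is a self-adjoint idempotent, then in particular $T_w=T_w^*$, so comparing indices via $\mathrm{ind}\,T_w^*=(w)_G^{-1}$ we get $(w)_G=(w)_G^{-1}$, i.e.\ $(w)_G^2=e$. This is not yet $(w)_G=e$, so I need a sharper argument: apply $T_w$ to a test function. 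Pick $f\in H_S$ supported in a small neighbourhood and compute $T_w^2 f$ using \eqref{e7} twice; idempotency forces $(T_w^2f)(x)=(T_wf)(x)$ for a.e.\ $x$, which after unwinding the translations gives $f((w^{-1})_G^2 x)=f((w^{-1})_Gx)$ on a set of positive measure for every such $f$. Letting $f$ range over enough functions, this yields $(w^{-1})_G^2=(w^{-1})_G$ in the group $G$, hence $(w)_G=e$.

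Alternatively — and this is probably the cleaner route — I would use the fact that $T_w=E_{wS}$ can only hold when the translation part is trivial, and handle the converse by contraposition: if $(w)_G=g\ne e$, then $T_w$ is a nonzero partial isometry with $(T_wf)(x)=I_{wS}(x)f(g^{-1}x)$; for such an operator to be self-adjoint one needs $\langle T_wf,h\rangle=\langle f,T_wh\rangle$ for all $f,h\in H_S$, and writing these out as integrals over $wS$ and comparing the arguments $g^{-1}x$ versus $gx$ forces $g^{-1}=g$ on $wS$ and moreover equality of the two supports $wS=g\cdot_G(wS)$; combined with idempotency (which as above pushes the translation to be trivial on the range) one concludes $g=e$, a contradiction. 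The main obstacle, and the step that requires care, is the passage from ``idempotent + self-adjoint'' to ``$(w)_G=e$'': a priori $(w)_G^2=e$ only gives an involution, and one must genuinely use that the \emph{same} operator $T_w$ is both idempotent and an isometry-like map onto $H_{wS}$ to kill the remaining order-two possibility. Invoking Lemma~\ref{l5} to turn everything into an identity between translations of arbitrary $L^2$ functions, where two distinct group translations are distinguishable, is what makes this rigorous.
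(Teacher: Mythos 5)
Your proposal is correct and takes essentially the same route as the paper: the ``if'' direction is exactly the paper's application of Lemma~\ref{l5} (index $e$ makes \eqref{e7} collapse to multiplication by $I_{wS}$), and for the ``only if'' direction the paper simply observes that idempotency gives $(ww)_G=(w)_G^2=(w)_G$, whence $(w)_G=e$ by cancellation in $G$ --- precisely the conclusion you reach once you abandon the insufficient self-adjointness argument (which only yields $(w)_G^2=e$). Your test-function computation is just a more explicit justification of the step the paper leaves implicit, namely that $T_{ww}=T_w$ (as nonzero operators) forces equality of the group elements implementing the translation.
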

\begin{proof}
Let $T_w$ be an orthogonal projection. Then $(ww)_G=(w)_G^2=(w)_G$ and $w_G^{-1}=w_G$. Hence, $(w)_G=\mathrm{ind}T_w=e$.

Suppose that $\mathrm{ind}T_w=e$. Then due to Lemma \ref{l5}, $T_w=E_{wS}$ which is an orthogonal projection.
\end{proof}

\begin{lemma}\label{pro} Every projection $E_X$ for $X\in\mathcal{J}$ is contained in $C^*_\delta(S)$ and equals $T_{ww^{-1}}$ for some $w\in\mathcal{F}$.\end{lemma}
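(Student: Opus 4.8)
The plan is to read this off directly from the structural results already established, principally the identity $\mathcal{J}=\{wS\mid w\in\mathcal{F}\}\cup\{\emptyset\}$ recorded after Corollary \ref{s4}, together with Lemma \ref{s2} and Corollary \ref{c2}. First I would dispose of the trivial case: if $X=\emptyset$, then $E_X=E_\emptyset=0\in C^*_\delta(S)$, so from now on assume $X$ is a nonempty constructible right ideal, and write $X=wS$ for some word $w\in\mathcal{F}$, which is possible precisely because of that identity.

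Next, the key move is to replace $w$ by the ``balanced'' word $ww^{-1}$. Since $\mathcal{F}$ is a monoid and $w^{-1}\in\mathcal{F}$, the product $ww^{-1}$ is again an element of $\mathcal{F}$, and $T_{ww^{-1}}=T_w T_{w^{-1}}$ is a monomial in the generators $T_a,T_a^*$, hence lies in $C^*_\delta(S)$ by definition. By Lemma \ref{s2} we have $ww^{-1}S=wS=X$. It remains to identify $T_{ww^{-1}}$ with $E_X$. For this I would compute the index: $\mathrm{ind}\,T_{ww^{-1}}=(ww^{-1})_G=(w)_G(w^{-1})_G=(w)_G(w)_G^{-1}=e$, using that $w\mapsto(w)_G$ is a homomorphism $\mathcal{F}\to G$. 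Corollary \ref{c2} then gives that $T_{ww^{-1}}$ is an orthogonal projection and that $T_{ww^{-1}}=E_{ww^{-1}S}=E_{wS}=E_X$. This proves both assertions simultaneously: $E_X\in C^*_\delta(S)$ and $E_X=T_{ww^{-1}}$.

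There is essentially no hard step here; the statement is a corollary packaging the passage from an arbitrary word $w$ to the self-adjoint idempotent word $ww^{-1}$ (Lemma \ref{s2}) with the index criterion (Corollary \ref{c2}). The only point that needs a word of care is making sure $ww^{-1}$ is genuinely admitted as an element of $\mathcal{F}$ and that $T_{ww^{-1}}$ coincides with the operator $E_{ww^{-1}S}$ of Lemma \ref{l5}; both follow because $T$ extends multiplicatively to $\mathcal{F}$ via $T_w=T_{p_1^{\pm1}}\cdots T_{p_n^{\pm1}}$ exactly as set up before Lemma \ref{l5}, and $T_w^*=T_{w^{-1}}$ on the level of words. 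No other subtlety arises.
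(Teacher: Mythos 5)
Your proof is correct and follows essentially the same route as the paper's: write $X=wS$ via Corollary \ref{s4}, pass to $ww^{-1}$ using Lemma \ref{s2}, and apply the index criterion of Corollary \ref{c2} to identify $T_{ww^{-1}}$ with $E_{wS}$. The only additions (the empty-set case and the remark that $T_{ww^{-1}}$ is a monomial) are harmless elaborations of what the paper leaves implicit.
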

\begin{proof}
By Corollary \ref{s4}, $X=wS$ for some $w\in F$. Due to Corollary \ref{c2}, if $(w)_G=e$ then $E_{wS}\in C^*_\delta(S)$.

Suppose $w$ is an arbitrary element in $\mathcal{F}$. By Lemma \ref{s2}, $wS=ww^{-1}S$ and $E_{wS}=E_{ww^{-1}S} $. Since $(ww^{-1})_G=e$, by Corollary \ref{c2} we have that $E_{wS}=T_{ww^{-1}}\in C^*_\delta(S)$.
\end{proof}

Xin Li \cite[Definition 2.2]{Li} defined the full semigroup $C^*$-algebra of a discrete semigroup as generated not only by isometries associated to the points of $S$, but also by projections corresponding to its constructible ideals. The aim of this construction is to obtain a smaller algebra than just generated by the isometries, so it stays reasonable at least in the case of a commutative semigroup. See \cite{Li} for a longer discussion.
Taking into account the topology on $S$, we adjust the definition of \cite{Li}, using the family $\mathcal{J}'$ instead of $\mathcal{J}$. If $S$ is discrete, the new definition coincides with the old one.  Consider a family of isometries $\{v_p| p\in S\}$ and a family of projections
$\{e_X| X\in\mathcal{J}'\}$ satisfying the following relations for any $p,q\in S$, and $X,Y\in\mathcal{J}'$:
\begin{equation}\label{e3}
 v_{pq}=v_pv_q,\ v_pe_Xv_p^*=e_{pX},
\end{equation}

\begin{equation}\label{e4}
e_S=1,\ e_{\emptyset}=0,\ e_{X\cap Y}=e_Xe_Y.
\end{equation}

\begin{definition}
The universal C*-algebra $C^*(S)$ of the semigroup $S$ is the universal $C^*$-algebra generated by $\{v_p| p\in S\}\bigcup \{e_X| X\in\mathcal{J}'\}$ with the relations above. Since the relations between $T_p$ and $E_X$ in $B(H_S)$ are the same, this algebra is well defined.
Denote by $D(S)$ the commutative $C^*$-algebra generated by the family of projections $\{e_X| X\in\mathcal{J}'\}$ in $C^*(S)$.
\end{definition}



\begin{lemma}\label{left}
There exists a surjective *-homomorphism $\lambda \colon C^*(S)\to C_\delta^*(S)$ such that $\lambda(v_p)=T_p$, $\lambda(e_{[X]})= E_X$. It is extended to a normal *-homomorphism $\lambda: C^*(S)^{**}\to VN(S)$. Both maps will be called the left regular representations, of $C^*(S)$ and $C^*(S)^{**}$ respectively.
\end{lemma}
\begin{proof}
Clearly, $E_X=E_Y$ if and only if $X\sim Y$. Hence, the map $\lambda\colon e_{[X]}\mapsto E_X$ is well defined. Due to the definition of $\cap$ on $\mathcal{J}'$, $\lambda$ is a semigroup isomorphism between the semigroups $\{e_{[X]}\colon [X]\in\mathcal{J}'\}$ and $\{E_X\colon X\in\mathcal{J}\}$. One can easily verify that the operators $T_p$ and $E_X$ satisfy the equations (\ref{e3}) and (\ref{e4}) for all $p\in S,\ X\in\mathcal{J}$. The universality of $C^*(S)$ implies the existence of the homomorphism $\lambda$. Its extension exists by the universality property of $C^*(S)^{**}$.
\end{proof}

Denote by $D_\delta(S)$ the $C^*$-subalgebra in $C^*_\delta(S)$ generated by monomials with index equal to $e$. By Corollary \ref{c2} and Lemma \ref{pro} $D_\delta(S)$ is generated by projections $\{E_X| \ X\in\mathcal{J}\}$, and is obviously commutative.

\begin{lemma}\label{D}
The algebras $D(S)$ and $D_\delta(S)$ are isomorphic.
\end{lemma}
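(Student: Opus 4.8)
The plan is to show that the left regular representation $\lambda$ of Lemma~\ref{left} restricts to an isomorphism of $D(S)$ onto $D_\delta(S)$. By Lemma~\ref{left} we have $\lambda(e_X)=E_X$, so $\lambda$ maps $D(S)$ onto the $C^*$-subalgebra of $C^*_\delta(S)$ generated by $\{E_X:X\in\mathcal J\}$, which by Corollary~\ref{c2} and Lemma~\ref{pro} is exactly $D_\delta(S)$; call this restriction $\lambda_0\colon D(S)\to D_\delta(S)$, a surjective $*$-homomorphism. It remains to prove injectivity. Since the projections $e_X$ pairwise commute with $e_Xe_Y=e_{X\cap Y}$, $e_S=1$, the linear span $\lin\{e_X:X\in\mathcal J\}$ is already a dense unital $*$-subalgebra of $D(S)$, and likewise $\lin\{E_X:X\in\mathcal J\}$ is dense in $D_\delta(S)$. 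So it suffices to prove that for every finite family $X_1,\dots,X_n\in\mathcal J$ and scalars $c_1,\dots,c_n$,
\[
\Bigl\|\sum_{i=1}^n c_ie_{X_i}\Bigr\|_{D(S)}\ \le\ \Bigl\|\sum_{i=1}^n c_iE_{X_i}\Bigr\|_{D_\delta(S)},
\]
the reverse inequality being automatic for a $*$-homomorphism; the resulting equality forces $\ker\lambda_0=0$.

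Next I compute both norms. Each $E_X$ is multiplication by $I_X$ on $H_S$, so $\sum_i c_iE_{X_i}$ is multiplication by $\varphi=\sum_i c_iI_{X_i}$ and its norm equals $\|\varphi\|_\infty$. For $I\subseteq\{1,\dots,n\}$ set $X_I=\bigcap_{i\in I}X_i$ (with $X_\emptyset=S$) and $C_I=X_I\setminus\bigcup_{j\notin I}X_j$; these cells partition $S$, and $\varphi$ takes the constant value $\sum_{i\in I}c_i$ on $C_I$, hence $\bigl\|\sum_i c_iE_{X_i}\bigr\|=\max\{\,|\sum_{i\in I}c_i|:\ \mu(C_I)>0\,\}$. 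On the other hand, $D(S)$ is commutative and generated by the projections $e_X$ subject to $e_S=1$, $e_\emptyset=0$, $e_{X\cap Y}=e_Xe_Y$ (which also gives $e_X\le e_Y$ whenever $X\subseteq Y$, since then $X\cap Y=X$). Thus every character $\chi$ of $D(S)$ satisfies $\chi(e_X)\in\{0,1\}$, and $\mathcal U_\chi=\{X\in\mathcal J:\chi(e_X)=1\}$ is a proper filter of the meet-semilattice $\mathcal J$: it contains $S$, misses $\emptyset$, is closed under finite intersections, and is upward closed in $\mathcal J$. Hence $\bigl\|\sum_i c_ie_{X_i}\bigr\|=\sup_\chi|\sum_{i:\,X_i\in\mathcal U_\chi}c_i|$. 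Fixing $\chi$ and putting $I=\{i:X_i\in\mathcal U_\chi\}$, we get $X_I\in\mathcal U_\chi$ (a finite intersection of members), so $X_I\ne\emptyset$, while for each $j\notin I$ the fact that $X_j\notin\mathcal U_\chi$ together with upward-closure forces $X_j\not\supseteq X_I$.

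The remaining, and main, point is to show $\mu(C_I)>0$: once this is known, $\sum_{i\in I}c_i=\sum_{i:X_i\in\mathcal U_\chi}c_i$ appears in the maximum computed above, which yields the desired inequality. This is exactly where the topological independence of constructible ideals is used. Concretely, one first records that a nonempty constructible right ideal has positive measure and that an inclusion $Y\subseteq Z$ between members of $\mathcal J$ which holds up to a null set holds literally (this is the precise strengthening of algebraic by topological independence). Granting this, suppose $\mu(C_I)=\mu\bigl(X_I\setminus\bigcup_{j\notin I}X_j\bigr)=0$; then $X_I=\bigcup_{j\notin I}(X_I\cap X_j)$ up to a null set, with each $X_I\cap X_j\in\mathcal J$, so topological independence gives $X_I=X_I\cap X_{j_0}$ up to a null set for some $j_0\notin I$, i.e. $X_I\subseteq X_{j_0}$, contradicting $X_{j_0}\not\supseteq X_I$; hence $\mu(C_I)>0$. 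The delicate part is precisely this null-set bookkeeping for constructible ideals — reconciling set-theoretic and measure-theoretic inclusions and nonemptiness — which is why the topological (rather than the merely algebraic) independence assumption is imposed; the rest of the argument is the routine identification of the Gelfand spectra of the two commutative algebras with spaces of filters and of essential values.
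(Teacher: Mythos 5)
Your overall strategy is the same as the paper's: surjectivity of $\lambda|_{D(S)}$ is immediate from Lemma~\ref{left}, and injectivity is reduced to the independence of the constructible right ideals. The paper disposes of the injectivity in one line by citing Lemma 2.20 of \cite{Li}; what you have written is essentially a reconstruction of that lemma adapted to the measure-theoretic setting (the norm on the universal side computed over characters of $D(S)$, the norm on the reduced side as the essential supremum of $\sum_i c_i I_{X_i}$, and topological independence used to show that the cell $C_I$ attached to a character has positive measure). This is the intended mechanism, and the norm computations themselves are correct.

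However, the two facts you merely ``record'' before the main step are not consequences of topological independence as the paper states it, and they are genuinely load-bearing. With $n=1$ the independence condition reads ``if $X=X_1$ up to a null set then $X=X_1$ up to a null set,'' which is vacuous; so it yields neither (a) that a nonempty constructible ideal has positive measure, nor (b) that an a.e.-inclusion $Y\subseteq Z$ between members of $\mathcal J$ holds literally. Both are needed for your argument to close. For (a): if $X_{I}\neq\emptyset$ but $\mu(X_I)=0$, then $e_{X_I}\neq 0$ in $D(S)$ (the universal relations only impose $e_\emptyset=0$, not $e_X=0$ for nonempty null $X$) while $E_{X_I}=0$, which would destroy injectivity outright. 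For (b): after topological independence hands you $X_I=X_I\cap X_{j_0}$ only up to a null set, the identity $e_{X_I}=e_{X_I}e_{X_{j_0}}$ in $D(S)$ --- which is what actually forces $\chi(e_{X_{j_0}})=1$ and produces your contradiction --- is imposed in the universal algebra only when the set-theoretic equality $X_I=X_I\cap X_{j_0}$ holds literally; a character of $D(S)$ has no reason to respect a.e.-equalities of ideals, so ``$X_I\subseteq X_{j_0}$ up to a null set'' does not contradict $X_{j_0}\notin\mathcal U_\chi$. You should either prove (a) and (b) for the semigroups under consideration (they are easy for, say, $S=\R_+$, where distinct constructible ideals differ by sets of positive measure), or make explicit that you are strengthening the independence hypothesis. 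As written, this step is asserted rather than established, and it is precisely the point where the passage from Li's discrete setting to the locally compact one is nontrivial.
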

\begin{proof}
By definition, $\lambda(D(S))$ contains the generators of $D_\delta(S)$. Applying Lemma 2.20 in \cite{Li} and using the topological independence of constructible right ideals in $S$ we obtain injectivity of $\lambda|_{D(S)}$.
\end{proof}

There exists a natural action of the semigroup $S$ on the $C^*$-algebra $D_\delta(S)$.
\begin{equation}\label{act}
\tau_p(A)=T_p AT_p^*,\ p\in S,\ A\in D_\delta(S).
\end{equation}
Using the formula (\ref{e7}), we obtain for $A=E_X$, $X\in\mathcal{J}$:
\begin{equation}\label{actex}
\tau_p(E_X)=E_{pX}.
\end{equation}

\section{Comparison with reduced semigroup $C^*$-algebras}

In the case when $S$ is a right reversible left cancellative locally compact semigroup with dense interiour, there exists a construction of the reduced $C^*$-algebra $C^*_{r}(S)$, see \cite{muhly-renault} and a more general construction in \cite{Sundar1}, \cite{Sundar2}. The connection between $C^*_{r}(S)$ and $C^*_\delta(S)$ precisely corresponds to the case of a locally compact group $C^*$-algebras. In particular, as we show further, the von Neumann closures of the two coincide.

We recall that a semigroup is called \emph{right reversible} if every pair of non-empty left ideals has a non-empty intersection (see \cite{Laca}, \cite{Cl}). The following theorem by Ore can be found in \cite{Cl}.

\begin{theorem}\label{Clif}
	A cancellative semigroup $S$ can be embedded into a group $G$ such that $G=S^{-1}S$ if and only if it is right reversible. And in this case, $G$ is a unique up to isomorphism group generated by $S$.
\end{theorem}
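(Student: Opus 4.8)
The plan is to prove the two implications separately; necessity is immediate, and sufficiency is the classical construction of the group of left fractions, for which right reversibility is precisely the hypothesis one needs.

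\emph{Necessity.} Suppose $S$ is a subsemigroup of a group $G$ with $G=S^{-1}S$. Given nonempty left ideals $I,J\subset S$, pick $a\in I$ and $b\in J$. Since $ab^{-1}\in G=S^{-1}S$, we may write $ab^{-1}=p^{-1}q$ with $p,q\in S$, and then $pa=qb$. As $I,J$ are left ideals, $pa\in SI\subset I$ and $qb\in SJ\subset J$, so $pa=qb\in I\cap J$. Hence $S$ is right reversible.

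\emph{Sufficiency.} Assume $S$ is cancellative and right reversible; applying reversibility to principal left ideals, this gives $Sa\cap Sb\neq\emptyset$ for all $a,b\in S$. On $S\times S$ — with $(a,b)$ interpreted as the formal fraction $a^{-1}b$ — I would define $(a,b)\sim(c,d)$ iff there exist $s,t\in S$ with $sa=tc$ and $sb=td$. Reflexivity and symmetry are clear; for transitivity, given $(a,b)\sim(c,d)$ via $s,t$ and $(c,d)\sim(c',d')$ via $u,v$, one applies right reversibility to $t,u$ to obtain $w_1,w_2\in S$ with $w_1t=w_2u$ and checks that $w_1s,w_2v$ witness $(a,b)\sim(c',d')$. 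Set $G=(S\times S)/\!\sim$ and write $[a,b]$ for the class of $(a,b)$. Define the product $[a,b]\cdot[c,d]=[ea,fd]$, where $e,f\in S$ are chosen via $Sb\cap Sc\neq\emptyset$ so that $eb=fc$. One then verifies: the product does not depend on the choice of $e,f$ or of representatives; it is associative; $[a,a]$ is a two-sided identity independent of $a$; and $[b,a]$ is inverse to $[a,b]$. Thus $G$ is a group.

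It remains to embed $S$. Fix $a_{0}\in S$ and set $\iota(s)=[a_{0},a_{0}s]$; using right reversibility one sees $[t,ts]=[t',t's]$ for all $t,t'$, so $\iota$ is representative-independent, and cancellativity gives injectivity. A short computation with the definition of the product shows $\iota$ is a homomorphism, and that $\iota(a)^{-1}\iota(b)=[a,b]$ for all $a,b\in S$; identifying $S$ with $\iota(S)$ we conclude $G=S^{-1}S$. The genuinely technical point — and the only place where right reversibility is used essentially — is the well-definedness of the multiplication on $G$; everything else is cancellation bookkeeping. This is the theorem of Ore (see \cite{Cl}).
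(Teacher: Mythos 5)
Your argument is correct: necessity is exactly the computation $pa=qb$ obtained from writing $ab^{-1}=p^{-1}q$, and sufficiency is the standard Ore construction of the group of left fractions, with right reversibility used precisely where you say it is (transitivity of $\sim$ and well-definedness of the product, the latter also requiring cancellativity). The paper gives no proof of this statement --- it is quoted as Ore's theorem with a reference to Clifford--Preston \cite{Cl} --- and your construction is the one found there, so there is nothing to compare beyond noting that the verifications you defer (independence of representatives, associativity, the identity and inverse laws) are routine and do go through.
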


This implies in particular, that if $S$ is right reversible with dense interiour, then $C_\delta^*(S)$, as well as $VN(S)$, does not depend on the choice of the group $G$ generated by $S$.

We recall the construction of $C^*_r(S)$ of \cite{muhly-renault,Sundar1} according to the symmetric case $G=S^{-1}S$ adopted here.
Similar to the subspace $L^2(S)\subset L^2(G)$, we can consider $L^1(S)\subset L^1(G)$, which is in addition
 a Banach algebra with respect to convolution. 
For any $f\in L^1(S)$ define an operator $V_f\in B(L^2(S))$:
\begin{equation}\label{vf}
V_f=\int f(a)T_ad\mu(a).\end{equation} 
One can easily verify that $V_f\xi = f*\xi$ for $\xi\in L^2(S)$ and
$$V_f^*=\int \overline{f(a)}T_a^*d\mu(a).$$
The reduced $C^*$-algebra of $S$ is the $C^*$-subalgebra of $B(L^2(S))$ generated by operators $V_f$ over all $f\in L^1(S)$.

\begin{proposition}
	Let $S$ be a locally compact right reversible left cancellative semigroup with dense interiour imbedded in a locally compact group $G$ so that $G=S^{-1}S$. Then $VN(S)$ equals to the von Neumann closure of $C^*_r(S)$ inside $B(L^2(S))$.
\end{proposition}
\begin{proof}
	It is sufficient to show that the commutators of $C^*_r(S)$ and $C^*_\delta(S)$ coincide. If $A\in B(L^2(S))$ commutes with $T_a$ and $T_a^*$ for every $a\in S$, then due to (\ref{vf}) $A$ commutes with every $V_f$ and $V_f^*$ for every $f\in L^1(S)$.
	
	To prove the reverse inclusion, take $\phi_i$ to be the approximate identity of $L^1(G)$ lying in $L^1(S)$ which exists by the assumption that the interiour of $S$ is dense in it. We can assume also that every $\phi_i$ has compact support so that $\phi_i\in L^2(S)$. For any $\xi\in L^2(G)$, $\phi_i*\xi\to\xi$ in the $L^2$-norm. Denote $\phi_{i,a}=T_a\phi_i$, where $a\in S$; one verifies that $V_{\phi_{i,a}}\xi = T_a (\phi_i*\xi)$ for $\xi\in L^2(S)$. Moreover, $T_a\phi_i\in L^1(S)$ so that $V_{\phi_{i,a}}\in C^*_r(S)$.
	
For any $A$ in the commutator of $C^*_r(S)$ and any $\xi\in L^2(S)$,
$$
A T_a\xi = AT_a\lim \phi_i*\xi = \lim AV_{\phi_{i,a}}\xi = \lim T_a (\phi_i*A\xi) = T_a A\xi,
$$	 
so that the two commutators coincide.
	\end{proof}

\begin{example}
There are semigroups to which the construction of \cite{muhly-renault,Sundar1} is not applicable while ours is.
Let $C\subset [0,1]$ be the middle-fifth Cantor set (of positive measure). Let $S$ be the additive semigroup
$$
S = \{0\} \cup \{2\}\cup \cup_{n=0}^\infty (2+2^{-2n-1}+2^{-2n-1}C)\cup [4,+\infty).
$$
Then the interiour of $S$ is not dense in it so $S$ does not satisfy the assumptions of \cite{muhly-renault,Sundar1}. However its ideals are topologically independent, and other assumptions made in Section \ref{section-ideals} are also satisfied.
\end{example}

\section{The semigroup C*-algebra and crossed products}

In what follows we prove a connection between $C_\delta^*(S)$, $VN(S)$ on one hand and the $C^*$- and von Neumann group crossed products by the group $G$ on the other hand.
 The proof is almost a verbatim of Lemma 4.2 of \cite{CEL} and based on Theorem 2.1 of \cite{Laca}. The difference with the mentioned references lies in the topology of the action of $G$ in the outcoming dynamical system.

Define a preorder on $S$: $p\leq q$ if $qp^{-1}\in S$, or equivalently if $q\in Sp$. Due to the assumption $G=S^{-1}S$, we obtain by Theorem \ref{Clif} that for any $p,q\in S$ the left ideals $\{xp\colon x\in S\}$, $\{yq\colon y\in S\}$ have a non-empty intersection. Hence $S$ is upwards directed with respect to this preorder. In the case when $S\cap S^{-1}\ne\{e\}$, this might not be an order; in fact, $p\le q$ and $q\le p$ if $p^{-1}q\in S\cap S^{-1}$.

Consider the directed system of $C^*$-algebras $\mathcal{D}_p$ indexed by $p\in S$, where every $\mathcal{D}_p=D_\delta(S)$. For $p,q\in S$ such that $p\leq q$ we have $qp^{-1}\in S$ and the action (\ref{act}) generates a $*$-homomorphism $\tau_{qp^{-1}}\colon \mathcal{D}_p\to\mathcal{D}_q$:
$$\tau_{qp^{-1}}(A)=T_{qp^{-1}}AT_{qp^{-1}}^*$$
which acts on the generating projections as a translation, see \eqref{actex}.
Clearly, $\tau_{qp^{-1}} = \tau_{qr^{-1}}\tau_{rp^{-1}}$ for $p\le r\le q$. Let $D_\delta^{(\infty)}(S)$ denote the $C^*$-inductive limit of the directed system $\{\mathcal{D}_p,\tau_{qp^{-1}}\}$.

Recall the notation $q^{-1}\cdotG X=\{q^{-1}x\colon x\in X\}\subset G$ for $q\in S$ and $X\in\mathcal{J}$.

\begin{lemma}\label{ind}The $C^*$-algebra $D_\delta^{(\infty)}(S)$ is isomorphic to $$D_G=C^*(\{E_{q^{-1}\cdotG X}\colon q\in S,\ X\in \mathcal{J}\})\subset B(L^2(G)).$$ \end{lemma}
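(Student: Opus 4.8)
The plan is to identify the inductive limit $D_\delta^{(\infty)}(S)$ concretely inside $B(L^2(G))$ by exhibiting compatible maps from each $\mathcal{A}_p$ into $D$ and checking they induce an isomorphism. First I would recall that each $\mathcal{A}_p = D_\delta(S)$ is generated by the projections $E_X$, $X\in\mathcal J$, and that the connecting map $\tau_{qp^{-1}}$ sends $E_X$ to $T_{qp^{-1}}E_XT_{qp^{-1}}^* = E_{qp^{-1}X}$ by \eqref{actex}. The key observation is that conjugating further and further ``spreads out'' the ideal: applying $\tau_{qp^{-1}}$ and then mapping into $B(L^2(G))$, one should track that $E_{qp^{-1}X}$, viewed appropriately, stabilizes to the multiplication operator $E_{p^{-1}\cdot_G X}$ on $L^2(G)$ (not $S$), since $qp^{-1}X = S\cap q\cdot_G(p^{-1}\cdot_G X)$ and conjugating by $T_q^*$ removes the left factor $q$. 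So the natural candidate for the limit map $\phi_p:\mathcal A_p\to D$ is $E_X\mapsto E_{p^{-1}\cdot_G X}$.

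Next I would verify the two things needed to get a $*$-homomorphism out of the inductive limit. First, compatibility: for $p\le q$ I must check $\phi_q\circ\tau_{qp^{-1}} = \phi_p$ on generators, i.e. $\phi_q(E_{qp^{-1}X}) = E_{q^{-1}\cdot_G(qp^{-1}X)} = E_{q^{-1}q\cdot_G p^{-1}\cdot_G X} = E_{p^{-1}\cdot_G X} = \phi_p(E_X)$, using that $q^{-1}q\cdot_G Y = Y$ for ordinary translations in $G$ (this is where genuine group translations, as opposed to the semigroup translations of Section 2, behave well — the subtlety flagged after \eqref{px} about $pp^{-1}X\ne X$ does not arise here). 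Second, each $\phi_p$ should be a well-defined $*$-homomorphism on $\mathcal A_p = D_\delta(S)$; since $D_\delta(S)\cong D(S)$ by Lemma \ref{D} and the relations defining $D(S)$ are $e_S=1$, $e_\emptyset=0$, $e_{X\cap Y}=e_Xe_Y$, I need the images $E_{p^{-1}\cdot_G X}$ to satisfy the analogous relations in $B(L^2(G))$ — which they do because $p^{-1}\cdot_G$ is a bijection of $G$, hence preserves $S$, $\emptyset$, and intersections. This yields $\phi:D_\delta^{(\infty)}(S)\to D$.

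Then I would argue surjectivity and injectivity of $\phi$. Surjectivity is immediate: the generators $E_{q^{-1}\cdot_G X}$ of $D$ are exactly $\phi_q(E_X)$, $X\in\mathcal J$, so every generator of $D$ is hit. For injectivity, the cleanest route is to build an inverse, or to use the universal property of the inductive limit: it suffices to show each $\phi_p$ is isometric on $\mathcal A_p$. Since $\phi_p$ is $E_X\mapsto T_p^*E_{pX}T_p$ intertwining the inclusion $D_\delta(S)\hookrightarrow B(H_S)$ — more precisely $\phi_p(A) = T_p^* A' T_p$ where $A'$ is the copy of $A$ acting on $H_S$ — and $T_p$ is an isometry with $T_p^*T_p=I$, the map $A\mapsto T_p^* A T_p$ restricted to operators supported under $T_pT_p^* = E_{pS}$ is isometric; one checks $E_X$ (for $X\in\mathcal J$, $X\subset S$) after conjugation lands in the corner $E_{pS}B(H_S)E_{pS}$ because $pX\subset pS$, so no norm is lost. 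Passing to the limit, $\phi$ is isometric, hence an isomorphism onto $D$.

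The main obstacle I anticipate is the bookkeeping in pinning down the correct formula for $\phi_p$ and verifying the compatibility relation $\phi_q\tau_{qp^{-1}}=\phi_p$ without sign errors in the direction of the translations — one must be careful that the semigroup $S$ sits inside $G$ and that $q^{-1}\cdot_G X$ genuinely lives in $G$, not in $S$, so that the connecting maps $\tau_{qp^{-1}}$, which a priori only relate subsets of $S$, become, after the change of picture, honest restrictions of $G$-translations whose composition law is exact. A secondary point requiring care is the isometry argument for injectivity: one should make explicit that the inductive limit norm of an element represented by $A\in\mathcal A_p$ equals $\lim_{q\ge p}\|\tau_{qp^{-1}}(A)\|$ and that each term in this (eventually constant, by the isometry of $\tau$) sequence equals $\|\phi_p(A)\|$, so the limit map does not collapse anything. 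Once these are in place the statement follows.
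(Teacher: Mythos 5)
Your overall strategy coincides with the paper's: the same candidate maps $\phi_p(E_X)=E_{p^{-1}\cdot_G X}$, the same compatibility computation $\phi_q\tau_{qp^{-1}}=\phi_p$ (and you correctly flag that it works because $qp^{-1}$ is a genuine element of $S$, so the translations compose exactly as group translations in $G$), and surjectivity by observing that the generators of $D$ lie in the image. The problem is in the injectivity step, which is the one substantive verification, and there your argument does not work as written.

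You realize $\phi_p$ concretely as $E_X\mapsto T_p^*E_{pX}T_p$, i.e.\ as conjugation by the isometry $T_p$ inside $B(H_S)$, and deduce isometry from $T_p^*T_p=I$ together with the remark that $E_{pX}$ lies in the corner under $T_pT_p^*=E_{pS}$. But this formula does not produce the intended operator: since $pX=p\cdot_G X\subset S$, one computes $T_p^*E_{pX}T_p=E_{S\cap p^{-1}\cdot_G(p\cdot_G X)}=E_X$, so the composite you describe is just the identity map of $D_\delta(S)$ --- indeed isometric, but not $\phi_p$. The essential feature of $\phi_p$ is that $p^{-1}\cdot_G X$ in general sticks out of $S$ (for $S=\R_+$, $X=S$, $p=1$ it is $[-1,+\infty)$), so $E_{p^{-1}\cdot_G X}$ cannot be reached by any conjugation performed inside $B(H_S)$; for the same reason your claim that $p^{-1}\cdot_G$ ``preserves $S$'' is false, and the relation $e_S=1$ is only respected in the non-unital sense that $E_{p^{-1}\cdot_G S}$ is the unit of the image algebra, not of $B(L^2(G))$. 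The missing idea --- which is exactly what the paper supplies --- is to first embed $B(H_S)$ into $B(L^2(G))$ via the lifting $\pi(A)=J_SAE_S$ and then conjugate by the \emph{unitary} $L_p$ of the ambient left regular representation, $\phi_p(A)=L_p^*\pi(A)L_p$; with this realization injectivity (hence isometry) of each $\phi_p$ is immediate because $\pi$ is injective and $L_p$ is unitary, and the rest of your argument (isometric connecting maps, limit map isometric, image a $C^*$-subalgebra containing all generators of $D$) goes through.
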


\begin{proof}
By definition, $D_\delta(S)\subset B(H_S)$. Recall that we denote $J_S:H_S\to L^2(G)$ the canonical imbedding; denote by $\pi\colon D_\delta(S)\to B(L^2(G))$ the lifting $\pi(A)= J_S AE_S$.

For any $p\in S$, the map
$$\phi_p(A)=L_p^*\pi(A)L_p,\ \ A\in D_\delta(S),$$
is a $*$-homomorphism $\phi_p\colon D_\delta(S)\to D_G$, such that
$\phi_p(E_X)= E_{p^{-1}\cdotG X}$ for all $X\in\mathcal{J}$.

Then for $p\leq q$ and $X\in\mathcal{J}$ we have:
$$\phi_q \tau_{qp^{-1}}(E_X)=
L_q^*E_{qp^{-1}X}L_q=E_{q^{-1}\cdotG (qp^{-1}X)}.$$
Since $qp^{-1}\in S$, we have $qp^{-1}X = (qp^{-1})\cdotG X$, and we can
continue the previous formula as
$$\phi_q \tau_{qp^{-1}}(E_X)=E_{p^{-1}\cdotG X}=\phi_p(E_X).$$
So the maps $\phi_p$ agree with $\tau_{qp^{-1}}$, and there exists a limit map $\Phi
\colon D^{(\infty)}_\delta(S)\to D_G $, such that $\Phi((A_p)_{p\in S})=\phi_p(A_p)$, $p\in S$. The homomorphisms $\phi_p$ are injective since $\pi$ is obviously injective and $L_p$ is a unitary operator. It follows that $\Phi$ is also injective.

To prove surjectivity of $\Phi$ it suffices to show that for any $q_1,\dots,q_n\in S$, $X_1,\dots, X_n\in\mathcal{J}$ and $\lambda_1,\dots,\lambda_n\in\mathbb{C}$ we have $$\sum\limits_{i}\lambda_i E_{q_i^{-1}\cdotG X_i}\in\Phi(D^{(\infty)}_\delta(S)).$$

Since the system $\{\mathcal{A}_p,\tau_{qp^{-1}}\}$ is upwards directed,
 there exists $s\in S$ such that $q_i\leq s $, $i=1,2,\dots n$, and this implies that $sq_i^{-1}X_i\in\mathcal J$ and $q_i^{-1}\cdotG X_i=s^{-1}\cdotG(sq_i^{-1}X_i)\in \phi_s(D_\delta(S))$. Hence $$\sum\limits_{i}\lambda_i E_{q_i^{-1}\cdotG X_i}\in\phi_s(D_\delta(S))$$  and we obtain
$$D_G=\overline{\bigcup\limits_{p\in S}\phi_p(D_\delta(S))}$$
Therefore $\Phi$ is surjective and we get the isomorphism $D_\delta^{(\infty)}(S)\cong D_G$.\end{proof}

We identify further the $C^*$-algebra $D_\delta^{(\infty)}(S)$ with $D_G$ and in this way consider it to be a subalgebra of $B(L^2(G))$. 

On $B(L^2(G))$, we have the adjoint action of $G$ generated by the left regular representation: $\alpha_g(A)= L_gAL_g^*$, $g\in G$, $A\in B(L^2(G))$. On an operator of multiplication $M_f$ by a function $f\in L^\infty(G)$ it acts by translation: $\alpha_gM_f = M_{L_g f}$, and in particular, $\alpha_g (E_X) = E_{g\cdotG X}$ for $X\in \cal J$.

Let us show that $D_G$ is invariant under this action. For $g\in G$, $q\in S$, $X\in\cal J$
\begin{equation}\label{taug}
\alpha_g (E_{q^{-1}\cdotG X}) = E_{(gq^{-1})\cdotG X}.
\end{equation}
Since $G=S^{-1}S$, we can write $gq^{-1} = t^{-1}s$ with some $s,t\in S$. Then $s\cdotG X = s\cdot X\in \mathcal J$, and $E_{(gq^{-1})\cdotG X} = E_{t^{-1}\cdotG (s X)}\in D_G$.

Moreover, since conjugation is strong operator continuous, $D_G''$ is also invariant under $\alpha$.
One can easily verify that the action $\alpha$ is point-strong continuous on $D_G$. $(D_G'',G,\alpha)$ is thus a von Neumann dynamical system, and by definition the pair  $(Id,L)$ is a covariant representation of this system.

\begin{lemma}\label{crosvn}
The crossed product $D_G''\rtimes_{\alpha}G $ of the commutative von Neumann algebra $D_G''$ and the group $G$ by the action $\alpha$, is isomorphic to the von Neumann algebra $\cal M =\{E_X,L_g\colon X\in S^{-1}\cdot\mathcal{J},g\in G\}'' $.
\end{lemma}
\begin{proof}
By definition (see \cite[Definition X.1.6]{Takesaki}), the crossed product $D_G''\rtimes_{\alpha}G $ is the von Neumann algebra generated by $\{\pi(A): A\in D_G''\}
\cup \{\tilde L_g: g\in G\}\subset B(L^2(G,L^2(G)))\simeq B(L^2(G\times G))$, where
\begin{align*}
(\pi(A)\xi)(s,t) &=\big(\alpha_s^{-1}(A)\xi(s,\cdot)\big)(t),
\\(\tilde L_g \xi)(s,t)&=\xi(g^{-1}s,t)
\end{align*}
for any $\xi\in L^2(G,L^2(G))$, $g,s,t\in G$, $X\in S^{-1}\cdot\mathcal{J}$. Since every $A\in D_G''$ is a multiplication operator $M_f$ with some $f\in L^\infty(G)$, we can write more precisely
$$
(\pi(M_f)\xi)(s,t) =L_{s^{-1}}f(t)\xi(s,t) = f(st)\xi(s,t).
$$
On $L^2(G\times G)$, define a unitary operator $W$:
$$W\xi(s,t)=\delta_G(t)^{\frac{1}{2}}\xi(st,t),$$
which has the adjoint $W^*\xi(s,t) = \delta_G(t)^{-1/2} \xi(st^{-1},t)$.
One verifies that
\begin{align*}
(W(M_f\otimes 1)W^*\xi) (s,t)&=
f(st)\xi(s,t)=(\pi(M_f)\xi)(s,t),
\\W(L_g\otimes 1) W^*&=\tilde L_g,
\end{align*}
so that $D_G''\rtimes_\alpha G = \{ W(A\otimes 1)W^*: A\in D_G''\cup \{L_g:g\in G\}\}''$. It is easy to see that this algebra is isomorphic to $\{ D_G''\cup \{L_g:g\in G\}\}''$, and as a consequence, to $\cal M$.
\end{proof}

\begin{lemma}\label{M-lin-space}
The algebra $\cal M$ is equal to the strong operator closure of the linear space genereated by the operators $E_{q^{-1}\cdotG X} L_g$ with $q\in S$, $X\in\mathcal{J}$, $g\in G$.
\end{lemma}
\begin{proof}
The statement follows from a direct calculation with $q,p\in S$, $X,Y\in\cal J$, $g,h\in G$:
\begin{align*}
E_{q^{-1}\cdot X}L_g E_{p^{-1}\cdot Y} L_h &= E_{q^{-1}\cdot X} (L_gE_{p^{-1}\cdot Y}L_g^*) L_gL_h
\\& = E_{q^{-1}\cdot X} E_{(gp^{-1})\cdot Y} L_{gh} = E_{(q^{-1}\cdot X)\cap((gp^{-1})\cdot Y)} L_{gh}. 
\end{align*}
Represent $gp^{-1}$ as $gp^{-1}=s^{-1}t$, $s,t\in S$. There exists $r\in S$ such that $r\in Sq\cap Ss$; then $rq^{-1},rs^{-1}\in S$ and 
$(q^{-1}\cdot X)\cap((gp^{-1})\cdot Y) = r^{-1}\cdot Z$ with
$$
Z = (rq^{-1}\cdot X)\cap (rs^{-1}t\cdot Y) = (rq^{-1}X)\cap (rs^{-1}tY) \in\cal J.
$$
This shows that the linear space in question is closed under multiplication, what proves the lemma.
\end{proof}

\begin{theorem}\label{crcrossvn}
	The algebra $VN(S)$ is isomorphic to the corner subalgebra $E_S\cal M E_S$ of $\cal M $.
\end{theorem}
\begin{proof}
	
	In fact, the algebra in question is the same as the strong operator closure of the C*-algebra $\cal A$ described in Lemma \ref{l2}, what we will now show.
	By Lemma \ref{M-lin-space}, $E_S\cal M E_S$ is the strong operator closure of the 
	linear space generated by
	 $E_SE_{q^{-1}\cdotG X} L_g E_S$ with $q\in S$, $X\in\mathcal{J}$, $g\in G$.
	Such an operator can be written in another form:
\begin{align*}
		E_SE_{q^{-1}\cdotG X} L_g E_S&=E_{S\cap (q^{-1}\cdot X)} L_g E_S L_g^* L_g
		= E_{q^{-1}X} E_{g\cdot S}L_g =\\
		&=E_{q^{-1}X} E_S E_{g\cdot S} L_g = 		E_{q^{-1}X} E_{S\cap g\cdot S} L_g.
	\end{align*}
		
Let $g=a^{-1}b$ with $a,b\in S$. Then $S\cap g\cdotG S = S\cap (a^{-1}\cdotG (bS)) = a^{-1}bS$.	By Lemma \ref{l2}, $E_{a^{-1}bS}L_{a^{-1}b}\in\cal A$.
Next, if $\Psi: C^*_\delta(S)\to\cal A$ denotes the isomorphism in Lemma \ref{l2}, then by Lemma \ref{pro} $E_{q^{-1}X}=\Psi(T_{ww^{-1}})$ for some $w\in\mathcal{F}$ which depends on $qX$.
	Thus, $E_{q^{-1}X} E_{a^{-1}bS}L_{a^{-1}b}\in \cal A$, so that $E_S\cal ME_S\subset \cal A''$. From the other side, $\cal A$ is generated as a $C^*$-algebra by the operators $E_SL_{a^{-1}b}E_S=\Psi(T_{a^{-1}b})$, $a,b\in S$ which are contained in $E_S\cal M E_S$; this shows that $E_S\cal M E_S=\cal A''$, what proves the theorem.
	
	\end{proof}

\section{The universal and reduced compact quantum semigroups}

\emph{A compact quantum semigroup} is a pair $(A,\Delta)$, where $A$ is a unital C*-algebra and $\Delta\colon A\to A\otimes_{\mathrm{min}} A$ is a unital *-homomorphism which is coassociative, i.e.
$$(\mathrm{id}\otimes \Delta)\Delta=(\Delta\otimes \mathrm{id})\Delta.$$
\emph{A locally compact quantum semigroup} is a pair $(B,\Delta)$, where $B$ is a von Neumann algebra and $\Delta\colon B\to B\overline{\otimes}B$ is a normal unital coassociative *-homomorphism. The homomorphism $\Delta$ in both cases is called \emph{a comultiplication}. See \cite{Timmermann} for details.

Consider the $C^*$-subalgebra $\mathcal{A}$ in $C^*(S)\otimes_{min}C^*(S)$ generated by the elements
$$\{  v_p\otimes v_p, \ e_X\otimes e_X\colon p\in S,\ X\in\mathcal{J}'\}.$$

Clearly, these elements satisfy relations (\ref{e3}), (\ref{e4}). The universal property of $C^*(S)$ implies the existence of a unital $*$-homomorphism $\Delta_u\colon C^*(S)\to\mathcal{A}$, such that
$$\Delta_u(v_p)=v_p\otimes v_p,\ \Delta_u(e_X)=e_X\otimes e_X.$$
The map $\Delta_u$ admits a restriction $\Delta_u|_{D(S)}: D(S)\to D(S)\otimes_{min} D(S)$ which is also a unital *-homomorphism.

The pair $\mathbb{Q}(S)=(C^*(S),\Delta_u)$ is a compact quantum semigroup \cite{AGL}. We call the algebra $C^*(S)$ with this structure the \emph{universal algebra of functions on the compact quantum semigroup} $\mathbb{Q}(S)$ associated with the semigroup $S$.

\subsection{On duality of semilattices and their C*-algebras}\label{dual}

\begin{definition}
	Let $E$ be a semilattice, i.e. a commutative semigroup of idempotents. A \emph{character} on $E$ is a semigroup morphism $\xi\colon E\to \{0,1 \}$. The set of all characters on $E$ is denoted $\hat{E}$; it forms a compact 0-dimensional semilattice with the pointwise multiplication and the topology of pointwise convergence. By Theorem 3.9 Chapter II of \cite{Hofman}, the functor $E\mapsto \hat{E}$ is a duality functor between the category of discrete semilattices and the category of compact 0-dimensional semilattices. In particular, the map $\eta_E\colon E\to \hat{\hat{E}}$ defined by $\eta_E(s)(c)=c(s)$ is an isomorphism. 
	
\end{definition}

\begin{proposition}\label{pl1 }
	Let $E\subset B(H)$ be a set of linearly independent commuting projections closed under multiplication and containing $1_H$. Then $C^*(E)$ is $*$-isomorphic to $C(\hat{E})$, where $\hat{E}$ is the dual semilattice of $E$. Under this isomorphism $\phi$ for every $e\in E$, $\chi\in\hat{E}$, $\phi(e)(\chi)=\chi(e)$.	
\end{proposition}
\begin{proof}
	Note that $E$ is a discrete (in the norm topology) semilattice. Denote $A=C^*(E)$. By Gelfand-Naimark Theorem, the commutative $C^*$-algebra $A$ is $*$-isomorphic to $C(\Omega)$, where $\Omega=\hat{A}$ is the space of characters on $A$ with the topology of pointwise convergence, which is compact and Hausdorff. Obviously, every $\chi\in \hat{A}$ is a character on $E$. Let us show that every $\chi\in \hat{E}$ extends to a continuous character on $A$.
	
	Since $E$ is linearly independent, we can extend $\chi$ by linearity to its linear span $B=\mathrm{lin}E\subset C^*(E)$.
	Let $t=\sum_{j=1}^n \lambda_jp_j\in B$, where $p_j\in E$, $\lambda_j\in \C$. Set $J_0=\{j: \chi(p_j)=0\}$, $J_1=\{j: \chi(p_j)=1\}$. Set $X = \vee \{p_j: j\in J_0\}$ to be the union of projections. Since $X$ is a linear combination of $p_j$, $j\in J_0$ and their products, we have $\chi(X)=0$.
	
	Next, set $Y= \prod_{j\in J_1} p_j$; we have $\chi(Y)=1$. Since $\chi(YX)=0$, we have $Y\ne YX=XY$, what means that $Y(H)\not\subset X(H)$. Pick $v\in Y(H)\cap (X(H))^\perp$ with $\|v\|=1$. Then
	$$
	\|\sum_{j=1}^n \lambda_jp_j\| \ge \|\sum_{j=1}^n \lambda_jp_j v\| = \|\sum_{j\in J_1} \lambda_jp_jv\| = $$
	$$ \|\sum_{j\in J_1} \lambda_jv\| = |\sum_{j\in J_1} \lambda_j| = |\chi(\sum_{j=1}^n \lambda_j p_j)|.
	$$
	This implies that $\chi$ has norm $1$ on the linear span of $E$, so it can be extended to its closure by continuity.
	
	Furthermore, the topologies on $\hat{E}$ and $\hat{A}$ are both defined by pointwise convergence, on $E$ and on $A$ respectively. The bijection defined above is thus continuous in the direction $\hat A\to\hat E$; both spaces being Hausdorff and compact, they are in fact homeomorphic.
\end{proof}

\begin{proposition}\label{pl2 }
	Let $E$ be a set of linearly independent commuting projections on a separable Hilbert space $H$, closed under multiplication and containing $1_H$, and $A=C^*(E)$. Then there exists a positive measure $\mu$ on $\hat{E}$, such that $A''$ is $*$-isomorphic to $L^\infty(\hat{E},\mu)\subset B(L^2(\hat{E},\mu))$.
	
\end{proposition}
\begin{proof}
	By Lemma 4.4.1 in \cite{Murphy}, there exists a separating vector $v\in H$ for the abelian von Neumann algebra $A''$. Analogously to the proof of Theorem 4.4.4 of \cite{Murphy}, projecting $H$ onto its subspace $H_v=[A''v]$ we get that the vector $v$ is cyclic for $A''$ restricted to $H_v$. Since $v$ is separating for $A''$, $A''$ restricted to $H_v$ is $*$-isomorphic to $A''$. 
	
	Denote $\phi\colon A\to C(\Omega)$ the $*$-isomorphism from Proposition \ref{pl1 }, where $\Omega=\hat{E}$. In what follows we reproduce the proof of the Theorem 4.4.3 of \cite{Murphy}.
	Define a positive linear functional $\tau$ on $C(\Omega)$: $\tau(f)=<\phi^{-1}(f)v,v>$. Applying the Riesz-Markov Theorem we obtain a positive measure $\mu$ on $\Omega$ realizing the functional $\tau$. 
	
	Let $\pi$ be the composition of $\phi$ and the $*$-representation of $C(\Omega)$ by multiplication operators on $L^2(\Omega,\mu)$. The map $u\colon Av\to C(\Omega)\subset L^2(\Omega,\mu)$, $av\mapsto \phi(a)$ is linear and isometric, and hence can be extended to a unitary $u$ from $H_v$ onto $L^2(\Omega,\mu)$. In fact, $\pi(a)=uau^{-1}$. Since $\pi(A)$ is strongly dense in $L^\infty(\Omega,\mu)$, we get $uA''u^{-1}=L^\infty(\Omega,\mu)$.	
\end{proof}

\begin{proposition}\label{pl3 }
	Let $E\subset B(H)$ be a set of linearly independent commuting projections on a separable Hilbert space $H$, closed under multiplication and containing $1_H$, and $A=C^*(E)$. Then there exists a comultiplication on $A''$, such that $\Delta(e)=e\otimes e$ for every $e\in E$.
\end{proposition}
\begin{proof}
	By Proposition \ref{pl2 }, $A''$ (resp. $A''\overline{\otimes}A''$) is $*$-isomorphic to $L^\infty(\hat{E},\mu)$ (resp. $L^\infty(\hat{E}\times \hat{E},\mu\times\mu)$). By Theorem 3.9 Chapter II of \cite{Hofman}, the set $\hat{E}$ of characters on $E$ is a compact zero-dimensional semilattice with the (jointly continuous) pointwise product. As in the group case, this product gives rise to a coproduct on $L^\infty(\hat{E},\mu)$ by the formula:
	$$\Delta(f)(x,y)=f(xy)$$
	Since elements of $E$ are characters on $\hat{E}$, we have for every $e\in E$ and $x,y\in \hat{E}$:
	$$\Delta(e)(x,y)=e(xy)=e(x)e(y)= (e\otimes e)(x,y).$$
\end{proof}

\begin{remark}
	The product on $\hat{E}$ is the pointwise product of characters. But this operation is not the pointwise product when the elements of $\hat{E}$ are considered as characters on $C^*(E)$. Namely, for any $\chi_1,\chi_2\in \hat{E}$, $\lambda_i\in\mathbb{C}$, $e_i\in E$, $1\leq i\leq n$ we have:
	$$ \chi_1\chi_2(\sum_{i=1}^{n}\lambda_ie_i)=\sum_{i=1}^{n}\lambda_i\chi_1(e_i)\chi_2(e_i)$$
	
\end{remark}

\subsection{The quantum semigroup associated to $S$}

\begin{theorem}\label{crqvn}
	There exists a comultiplication $\Delta\colon VN(S)\to VN(S)\overline{\otimes}VN(S)$, and its restriction $\Delta\colon C^*_\delta(S)\to C^*_\delta(S)\mathop{\otimes}\limits_{min}C^*_\delta(S)$, with which $\big(VN(S),\Delta\big)$ is a locally compact quantum semigroup and $\mathbb{Q}(S) = \big(C^*_\delta(S),\Delta\big)$ is a compact quantum semigroup.
\end{theorem}
\begin{proof}	
Recall that we suppose $G$ to be second countable, so that $L^2(G)$ is separable.
	We use the Theorem \ref{crcrossvn} to identify $VN(S)$ with the corner $E_S(D_G''\rtimes_{\tau}G)E_S$ inside $B(L^2(G))$. Due to Proposition \ref{pl3 }, there exists a coproduct on $D_G''$, defined on generators by
		$$ \Delta(E_{q^{-1}\cdotG X})= E_{q^{-1}\cdotG X}\otimes E_{q^{-1}\cdotG X}$$
	for $q\in S, X\in\mathcal{J}$.
	One can easily see that $\Delta$ commutes with the action of $G$ on $D_G''$ defined in (\ref{taug}).  Consequently, $\Delta$ gives rise to a comultiplication on $D_G''\rtimes_{\tau}G$, which we also denote by $\Delta$. Due to the fact that $E_S\in D_G''$, using Lemma \ref{crcrossvn} we obtain the required comultiplication $\Delta$ on $VN(S)$.
	
Since $ \Delta(E_{q^{-1}\cdotG X})\in C^*_\delta(S)\otimes C^*_\delta(S)$ for every generator $E_{q^{-1}\cdotG X}$ with $q\in S$, $X\in \cal J$, the map $\Delta$ restricts to a comultiplication on $C^*_\delta(S)$.		\end{proof}


\begin{remark}
The bialgebras $C^*(S)$ and $C^*_\delta(S)$ are co-commutative, as for example the group $C^*$-algebra $C^*(G)$ of $G$. But their dual algebras, unlike the Fourier-Stieltjes algebra $B(G)=C^*(G)^*$, cannot be viewed as function algebras on $S$ or even on $G$.
It is possible that $\phi,\psi\in (C^*_\delta(S))^*$ are nonequal but have the same values on $T_a$ and $T_a^*$ for all $a\in S$.

More specifically, consider for example $G=\Z$, $S=\Z_+$ and $\phi_k(T) = \langle T\delta_k,\delta_k\rangle$, $k\in\Z$. Then $\phi_k(T_a)=\phi_k(T_a^*)=\delta_0(a)$ for all $k\in\Z$, $a\in\Z_+$, but $\phi_k(T_aT_a^*) = I_{\Z_+}(k-a)$ while $\delta_0(T_aT_a^*) = \delta_0(a)$.
\end{remark}

In what follows we use the commutativity assumption just to guarantee that $S$, which is supposed to be right-revrsible, is also left-reversible.

\begin{proposition}\label{ab}
	Let $S$ be abelian. Then $\mu(X)>0$ for any constructible right ideal $X$ of $S$.
\end{proposition}
\begin{proof} Clearly, any ideal of the form $pS$ equals $Sp$ and is not empty. Due to the Theorem \ref{Clif}, the intersection of any pair of non-empty ideals is non-empty. For any non-empty ideal $X$ in $S$ and $p\in S$ we have
	$$p^{-1}X=\{x\in S\colon\ px\in X\}=p^{-1}\cdotG (pS\cap X).$$
	Therefore, $p^{-1}X$ is non-empty for any non-empty ideal $X$ in $S$, and so is $pX$. Hence, every constructible right ideal of $S$ is non-empty.
	
	Let $U$ be any open subset in $S$ with $\mu(U)>0$. Then for any non-empty ideal $X$ in $S$ taking $p\in X$ we obtain $pU\subset X$, hence, $\mu(X)>0$.
	\end{proof}

\begin{remark}\label{comm}
If $S$ is abelian, then there exists a natural short exact sequence connecting $C^*_\delta(S)$ with $C^*_\delta(G)$ described below.

Consider the commutator ideal $K$ in $C^*_\delta(S)$, i.~e.\ the ideal generated by $\{[A,B]=AB-BA\colon A,B\in C^*_\delta(S)\}$.
Among others, $K$ contains the operators
$$
T_aT_a^*-T_a^*T_a=E_{aS}-E_S
$$
for all $a\in S$.
For any $X\in\cal J$, $a\in S$ we have
\begin{align*}
T_a^*(E_X-E_S)T_a&=E_{a^{-1}X}-E_S,\\
T_a(E_X-E_S)T_a^*&=E_{aX}-E_{aS},
\end{align*}
what allows to show by induction that $E_X-E_S\in K$ for every $X\in \cal J\setminus\{\emptyset\}$. Consequently, in $C^*_\delta(S)/K$ we have the equivalence classes $[E_X] = [E_S]=1$ for all $X\in \mathcal{J}\setminus\{\emptyset\}$.

By Lemma \ref{l5}, $T_w = E_{wS} T_{(w)_G}$ for every $w\in\cal F$, and it follows that $[T_w] = [E_S T_{(w)_G}]$ in $C^*_\delta(S)/K$.

Due to Lemma \ref{l2}, $C^*_\delta(S)$ is the closed linear space generated by operators of the form $E_X T_g$ with $X=wS\in\mathcal{J}, g\in G$, $(w)_G=g$ (where $T_g$ is understood as $T_g=T_{a^{-1}b}$ with any representation $g=a^{-1}b$). The discussion above implies that $C^*_\delta(S)/K$ is generated as a linear space by the classes $[E_ST_g]=[T_g]$, $g\in G$.
 
Denote $\hat L_g = [T_g]$. For all $g_1,g_2\in G$ one sees that $\hat L_{g_1} \hat L_{g_2} = \hat L_{g_1g_2}$,
and every $\hat L_g$ is unitary since $\hat L_g^*=\hat L_{g^{-1}}$.

Let us show that $\|\sum_{k=1}^n c_k \hat L_{g_k}\|_{C^*_\delta(S)/K} = \|\sum c_k L_{g_k}\|_{B(L^2(G))}$ for all $c_k\in \mathbb{C}$, $g_k\in G$. We have (by Lemma \ref{l2})
$$
\|\sum_{k=1}^n c_k \hat L_{g_k}\|_{C^*_\delta(S)/K} \le \|\sum c_k T_{g_k}\|_{C^*_\delta(S)}
  \le \|\sum c_k L_{g_k}\|_{B(L^2(G))}.
$$
From the other side, the fact that $S$ has non-empty interior implies that the norm of $T=\sum_{k=1}^n c_k L_{g_k}$ is attained on $H_S$. Indeed, for every $f\in L^2(G)$ and every $\e>0$ there is a compact set $K\subset G$ such that $\|f-E_K f\|<\e$; there exists (see \cite{mukhin}, or alternatively this can be shown directly) $g\in G$ such that $gK\subset S$, so that $E_SE_{gK}=E_{gK}$. Set $h=L_gf$. We have $E_{gK}h = L_g(E_Kf)$, and
\begin{align*}
\|E_Sh-h\| &\le \|E_S(h-E_{gK}h)\|+\|E_SE_{gK}h-h\| 
\\&\le 2\|h-E_{gK}h\| = 2\|f-E_K f\| <2\e.
\end{align*}
If $f$ is chosen so that $\|f\|=1$ and $\|Tf\|>\|T\|-\e$, then $\|h\|=1$,
$$
\|T h\| = \|L_g T f\| = \|Tf\|>\|T\|-\e,
$$
and at the same time $\|T E_Sh-Th\| \le 2\|T\|\e$, what implies $\|TE_Sh\|>\|T\|-(1+2\|T\|)\e$. This proves the statement.

Thus, we have an isomorphism $C^*_\delta(S)/K\simeq C^*_\delta(G)$ and the short exact sequence:
\begin{equation}\label{ses}
0\rightarrow K \rightarrow C^*_\delta(S)\rightarrow C^*_\delta(G)\rightarrow 0.\end{equation}
\end{remark}

\begin{example}
Let us calculate the algebra $C^*_\delta(\R_+)$; we specify that in our notation $\R_+=[0,+\infty)$.
\end{example}
By our assumptions, $G=\R$. It is immediate that
\begin{align*}
\cal J  & = \{ [t,+\infty): t\in\R_+\} \cup\{\emptyset\}.
\end{align*}
Below, denote $E_{[t,+\infty)}=E_t$, $t\in\R$. In the multiplicative notation used throughout the paper, one checks that $a^{-1}bS=[\max(0,b-a),+\infty)$ for $a,b\ge0$, and in general $wS=[t,+\infty)$ with $t\ge \max(0,(w)_\R)$ for every $w\in\cal F$.
Conversely, if $g\in\R$ and $t\ge\max(0,g)$, then $g=t-s$ with $s\ge0$, and $[t,+\infty) = ts^{-1}S$. 
According to Lemma \ref{l2}, we have thus
$$
C^*_\delta(\R_+) \simeq \overline{\lin}\{ E_t L_g: t\in\R_+, g\in\R, t\geq g\}.
$$
It is worth noting that
$$
D(\R_+) = C^*( E_t: t\in\R_+)\subset B(L^2(\R_+)).
$$
This algebra can be also described as the space of functions supported in $\R_+$ and such that $\lim_{t\to t_0-0} f(t)$ exists for every $t_0\in(0,+\infty]$ and $f(t_0)=\lim_{t\to t_0+0} f(t)$ for every $t_0\in[0,+\infty)$. This is the uniform closure of the algebra of piecewise continuous functions, and is sometimes called by the same name.

The  short exact sequence (\ref{ses}) in this case is written as
$$0\rightarrow K \rightarrow C^*_\delta(\R_+)\rightarrow C^*_\delta(\R)\rightarrow 0.$$
The commutator ideal $K$ in $C^*_\delta(\R_+)$ has the following form:
$$
K=\overline{\lin}\{ E_{[a;b)} L_g: a,b\in\R_+, g\in\R, b \geq a\geq g\}.
$$


\end{document}